\title{Realization of symmetry of $A^{(1)*}_2$-surfaces as transformations of logarithmic connections}
\author{Takafumi Matsumoto\footnote{National Institute of Technology, Kitakyushu College, 5-20-1 Shii, Kokuraminamiku, Kitakyushu, Fukuoka, 802-0985 Japan, Email address: tfmatumt@apps.kct.ac.jp}}
\date{}
\date{}
\theoremstyle{definition}
\newtheorem{definition}{Definition}[section]
\newtheorem{theorem}[definition]{Theorem}
\newtheorem{proposition}[definition]{Proposition}
\newtheorem{Problem}{Problem}[section]
\newtheorem{lemma}[definition]{Lemma}
\newtheorem{remark}[definition]{Remark}
\newcommand{\mbZ}{{\mathbb Z}}                   
\newcommand{\mbC}{{\mathbb C}}        
\newcommand{\mbH}{\mathbb{H}}           
\newcommand{\mbP}{\mathbb{P}}
\newcommand{\mbR}{\mathbb{R}}
\newcommand{\mcD}{\mathcal{D}}
\newcommand{\mcE}{\mathcal{E}}
\newcommand{\mcF}{\mathcal{F}}
\newcommand{\mcM}{\mathcal{M}}
\newcommand{\mcN}{\mathcal{N}}
\newcommand{\mcO}{\mathcal{O}}
\newcommand{\mcS}{\mathcal{S}}
\newcommand{\mcU}{\mathcal{U}}
\newcommand{\mcY}{\mathcal{Y}}
\newcommand{\Aut}{\textrm{Aut}}
\newcommand{\Coker}{\textrm{Coker}\,}
\newcommand{\res}{\textrm{res}}
\newcommand{\App}{\textrm{App}}
\newcommand{\Image}{\textrm{Im}\,}
\newcommand{\Ker}{\textrm{Ker}\,}
\newcommand{\id}{\textrm{id}}
\newcommand{\Pic}{\textrm{Pic}\,}
\newcommand{\pl}{\mathbb{P}^1}
\newcommand{\Opl}{\mathcal{O}_{\pl}}
\newcommand{\Ompld}{\Omega^1_{\pl}(D)}
\newcommand{\tmid}{\textrm{mid}}
\newcommand{\mc}{\textrm{mc}}
\newcommand{\GM}{\textrm{GM}}
\newcommand{\MDR}{\textrm{MDR}}
\newcommand{\DR}{\textrm{DR}}
\newcommand{\rMDR}{\textrm{MDR}_{X/T}}
\newcommand{\rDR}{\textrm{DR}_{X/T}}
\newcommand{\vb}{\mathrel{} \middle | \mathrel{}}
\newcommand{\xj}[1]{\Omega^{#1}_{X}(\log J)}
\newcommand{\zd}{\Omega^1_{Z}(D_Z)}
\newcommand{\xjzq}[1]{\Omega^{#1}_{(X,J)/(Z, Q)}}
\begin{document}
\maketitle
\begin{abstract}
An $A^{(1)*}_2$-surface is a space of initial conditions of certain difference Painlev\'e equations. $A^{(1)*}_2$-surfaces are realized as the moduli spaces of parabolic logarithmic connections. In this paper, we realize the symmetry of $A^{(1)*}_2$-surfaces as transformations of parabolic logarithmic connections. 
\end{abstract}

%%%%%%%%%%%%%%%%%%%%%%%%%%%%%%%%%%%%%%%%%%%%
%%%%%%%%%%%%%%%%%%%%%%%%%%%%%%%%%%%%%%%%%%%%
\section{Introduction}
%%%%%%%%%%%%%%%%%%%%%%%%%%%%%%%%%%%%%%%%%%%%
%%%%%%%%%%%%%%%%%%%%%%%%%%%%%%%%%%%%%%%%%%%%
The (additive) difference Painlev\'e equations are eleven types of non-linear difference equations.
Their classification was carried out by Sakai \cite{SaRa} through the classification of suitable smooth projective rational surfaces by using affine root systems. We call a surface associated with an affine root system $R$ an $R$-surface. In his theory, the extended affine Weyl group $\widetilde{W}(R^\perp)$ acts on a suitable family of $R$-surfaces, and the actions of elements in the root lattice give rise to discrete dynamical systems, which are the discrete Painlev\'e equations. In particular, an $R$-surface is identified as the space of initial conditions of the discrete Painlev\'e equations, and the extended affine Weyl group $\widetilde{W}(R^\perp)$ is the symmetry of the space of initial conditions. Therefore, the symmetries (B\"acklund transformations) of the difference Painlev\'e equations are explained by the symmetry of $R$-surfaces. The following is the list of surfaces and their symmetries corresponding to the difference Painlev\'e equations.
\[
	\centering
	\hspace{-25pt}
	\begin{tabular}{|c||c|c|c|c|c|c|c|c|c|c|c|c|}\hline
		surface &$R$&$A^{(1)**}_0$&$A^{(1)*}_1$&$A^{(1)*}_2$& $D^{(1)}_4$&$D^{(1)}_5$&$D^{(1)}_6$&$D^{(1)}_7$&$D^{(1)}_8$&$E^{(1)}_6$&$E^{(1)}_7$&$E^{(1)}_8$ \\ \hline
		symmetry &$R^\perp$&$E^{(1)}_8$&$E^{(1)}_7$&$E^{(1)}_6$& $D^{(1)}_4$&$A^{(1)}_3$&$(A_1+A_1)^{(1)}$&$A^{(1)}_1$&$A^{(1)}_0$&$A^{(1)}_2$&$A^{(1)}_1$&$A^{(1)}_0$ \\ \hline
	\end{tabular}
\]
It is known that the difference Painlev\'e equations are obtained by discrete isomonodromic deformations of systems of linear differential equations (i.e. meromorphic connections on a trivial bundle over $\pl$). It implies that moduli spaces of suitable meromorphic connections can be regarded as the spaces of initial conditions for the difference Painlev\'e equations. Hence, a suitable family of moduli spaces of connections has an (extended) affine Weyl group symmetry. Then, the following problem naturally arises.

\begin{Problem}\label{problem}
Find transformations of meromorphic connections realizing the Weyl group symmetries.
\end{Problem}

This problem is theoretically answered in the case of linear differential equations by using the theory of quivers (for example, see \cite{BoaQu} and \cite{HirMo}).
Moduli spaces of linear differential equations with singularities are realized as quiver varieties (see Boalch \cite{BoaIr}, Crawley-Boevey \cite{Cr}, Hiroe and Yamakawa \cite{HY}, and Hiroe \cite{HirLi}). Quiver varieties have Weyl group symmetries generated by reflection functors. It is known that the reflection functors are translated to transformations of differential equations called addition and middle convolution. 

Our interest is in moduli spaces of meromorphic connections. The moduli space of linear differential equations is a proper open subset of the moduli space of meromorphic connections of degree zero. Hence, the theory of quivers is insufficient to explain symmetries of the whole moduli space of meromorphic connections. Our aim is to answer Problem \ref{problem} not using the theory of quivers, but only the theory of parabolic connections.

%Riemann-Hilbert map from moduli spaces of parabolic logarithmic connections to multiplicative quiver varieties. 

This paper is devoted to the case $R=A^{(1)*}_2$. The author \cite{Ma} showed that a suitable family of $A^{(1)*}_2$-surfaces is realized as a family of moduli spaces of rank three parabolic logarithmic connections on $\pl$ with three points (see $\S$\ref{logcon}). There is an explicit correspondence between points on $A^{(1)*}_2$-surfaces and parabolic logarithmic connections (see $\S$\ref{normform}). Through this correspondence, an action of the extended affine Weyl group $\widetilde{W}(E^{(1)}_6)$ is defined on the family of moduli spaces. We will explicitly write down transformations of parabolic connections realizing the isomorphisms between $A^{(1)*}_2$-surfaces. The most difficult realization is the reflection of the central node of the Dynkin diagram $E^{(1)}_6$. Many people believe that it is done by using middle convolutions. However, even when the linear differential equations are written down explicitly, no one knows how to express the transformations realizing the reflection.

We state the main theorem. Let $M(\boldsymbol{\nu})$ be the moduli space of $\boldsymbol{\nu}$-parabolic logarithmic connections and $S_{\boldsymbol{\nu}}$ be a suitable $A^{(1)*}_2$-surface. Let $w_3$ be the reflection of the central node of the Dynkin diagram $E^{(1)}_6$, and we denote the corresponding isomorphism $S_{\boldsymbol{\nu}}\rightarrow S_{w_3(\boldsymbol{\nu})}$ by $\phi_{w_3}$. We will see in Section \ref{symsec} that $\phi_{w_3}$ is induced by a quadratic transformation of the projective plane $\mbP^2$. Let $\varphi_{\boldsymbol{\nu}}\colon M(\boldsymbol{\nu})\to S_{\boldsymbol{\nu}}$ be the isomorphism in Theorem \ref{varphi}. We denote the middle convolution with meromorphic one form $\beta$ by $\mc_\beta$ (which will be explained in Section \ref{mcsec}), and define transformations $\otimes_{\pm}$ for connections by $\otimes_{\pm}(E, \nabla)=(E, \nabla)\otimes (\Opl(\pm1), d)$.

\begin{theorem}[Theorem \ref{MT}]\label{MTintro}
	The following diagram commutes for a suitable $\beta$.
	\[
	\begin{tikzcd}%[ampersand replacement=\&]
		M(\boldsymbol{\nu})\arrow[r, "\otimes_{-}\circ \mc_\beta \circ \otimes_+"]\arrow[d, "\varphi_{\boldsymbol{\nu}}"]&[40pt]M(w_3(\boldsymbol{\nu}))\arrow[d, "\varphi_{w_3(\boldsymbol{\nu})}"]\\
		S_{\boldsymbol{\nu}}\arrow[r, "\phi_{w_3}"]&S_{w_3(\boldsymbol{\nu})}
	\end{tikzcd}
	\]
\end{theorem}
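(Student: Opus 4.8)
The plan is to verify the commutativity by computing both composite maps on an explicit normal form of a parabolic logarithmic connection and comparing the results on $S_{\boldsymbol{\nu}}$. Since $\varphi_{\boldsymbol{\nu}}$ is an isomorphism, it suffices to show that $\varphi_{w_3(\boldsymbol{\nu})}\circ(\otimes_{-}\circ\mc_\beta\circ\otimes_{+})=\phi_{w_3}\circ\varphi_{\boldsymbol{\nu}}$ as maps $M(\boldsymbol{\nu})\to S_{w_3(\boldsymbol{\nu})}$. First I would fix the explicit description of a generic point of $M(\boldsymbol{\nu})$ by its normal form $(E,\nabla)$ from Section \ref{normform}, together with the coordinates $(p,q)$ on $S_{\boldsymbol{\nu}}$ supplied by $\varphi_{\boldsymbol{\nu}}$. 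Then I would track the effect on the local data at the three singular points (the residues, their eigenvalues, and the parabolic flags) under each of the three operations in turn: the twist $\otimes_{+}$ by $(\Opl(1),d)$, which shifts the degree and the eigenvalues of the residues in a controlled way; the middle convolution $\mc_\beta$, whose action on the residue data and on the underlying bundle is governed by the formulas recalled in Section \ref{mcsec}; and the final twist $\otimes_{-}$, which restores the degree. Composing these parameter shifts, I expect to recover precisely the action of $w_3$ on $\boldsymbol{\nu}$, which pins down the target moduli space $M(w_3(\boldsymbol{\nu}))$ and, crucially, fixes the right choice of the one-form $\beta$ (its residues must be calibrated against the eigenvalue differences so that the convolution lands in the logarithmic, rank-three locus).

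The core computation is then to identify $\varphi_{w_3(\boldsymbol{\nu})}$ of the resulting connection with $\phi_{w_3}$ applied to $(p,q)=\varphi_{\boldsymbol{\nu}}(E,\nabla)$. Here I would use the description in Section \ref{symsec} of $\phi_{w_3}$ as the map on $S_{\boldsymbol{\nu}}$ induced by a quadratic Cremona transformation of $\mbP^2$ (a standard quadratic transformation based at three points, composed with a projective linear map), and match its coordinate formula against the coordinates read off from the transformed connection. Concretely, this means computing the apparent singularity and its dual parameter — or whichever pair of coordinates $\varphi$ is built from — for $\otimes_{-}\circ\mc_\beta\circ\otimes_{+}(E,\nabla)$ in terms of $(p,q)$, and checking that the answer is the birational map describing $\phi_{w_3}$. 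Because both sides are morphisms between smooth projective surfaces, it is enough to check the identity on a Zariski-dense open set where the normal form is valid and $\mc_\beta$ preserves irreducibility; the identity on the rest follows by continuity (separatedness of the moduli spaces).

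The main obstacle I anticipate is controlling $\mc_\beta$ on the level of the \emph{bundle}, not just the residue eigenvalues: middle convolution is defined cohomologically (as a subquotient of a pushforward of a tensor product), so extracting the explicit rank-three logarithmic connection and its parabolic structure from that construction — and verifying that the underlying bundle is again the one used in the normal form of Section \ref{normform}, rather than an elementary modification of it — is the delicate point. In particular one must check that no extra apparent singularities are created and that the degree bookkeeping closes up exactly after the two twists; this is where the precise choice of $\beta$ does real work. A secondary technical point is the behavior at reducible connections and at the loci where $\varphi_{\boldsymbol{\nu}}$ or $\phi_{w_3}$ is defined by a different chart, which I would handle by the density-and-continuity argument above rather than by direct computation. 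Once the generic coordinate identity is established, commutativity of the diagram follows.
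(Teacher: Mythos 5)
Your plan is essentially the paper's own proof: the author likewise verifies the identity on the normal form by (i) tracking local exponents through $\otimes_+$, $\mc_\beta$, $\otimes_-$ to recover $w_3(\boldsymbol{\nu})$, (ii) computing an explicit local basis of $\mathbb{R}^1\pi_{2*}\MDR_{X/T}$ and the Gauss--Manin connection in that basis, and (iii) reading off the apparent singularity $\overline{q}$ and dual parameter $\overline{p}$ and matching them with the quadratic transformation (\ref{quadtranf}), concluding by density on a Zariski open subset. The delicate point you flag (controlling the bundle underlying $\mc_\beta$) is exactly what Steps 1--2 of the paper's proof address.
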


We discuss the relation between Theorem \ref{MTintro} and the symmetries (B\"acklund transformations) of the difference Painlev\'e equations. It is well known that the Painlev\'e equations are obtained by isomonodromic deformations of linear differential equations. The commutativity of the isomonodromic deformation and middle convolution was established by Haraoka and Filipuk \cite{HF}, Boalch \cite{BoaSi}, and Yamakawa \cite{YamFou}. Hence, the symmetries of the Painlev\'e equations are explained by middle convolution. On the other hand, it is not known whether the difference Painlev\'e equations are invariant under middle convolution. Theorem \ref{MTintro} means that the middle convolution $\mc_{\beta}$ is compatible with the symmetries of spaces of initial conditions. This shows that the symmetries of the difference Painlev\'e equations of type $A^{(1)*}_2$ are explained by middle convolution.

We mention some related results. Inaba, Iwasaki, and Saito provided an explicit correspondence between a good family of $D^{(1)}_4$-surfaces and a family of moduli spaces of rank two parabolic logarithmic connections on $\pl$ with four points \cite{IIS2}. In this case, Haraoka and Filipuk realized the reflection of the central node by using middle convolution for linear differential equations \cite{HF}. We expect that our method also works well in the $D^{(1)}_4$-case. We note that Arinkin and Lysenko realized the reflection of the central node of the Dynkin diagram $D^{(1)}_4$ without middle convolution \cite{AL}. We don't know the relation of thier method and middle convolution. Yamakawa proved that Riemann--Hilbert map from moduli spaces of parabolic logarithmic connections to multiplicative quiver varieties is biholomorphic \cite{YamGeo}. Hence, the symmetry of moduli spaces of parabolic logarithmic connections is indirectly explained by the reflection functor of multiplicative quiver varieties.

This paper is organized as follows. Section 2 is devoted to the background results for this study. After making a family $\mcS\rightarrow \mcN$ of $A^{(1)*}_2$-surfaces, we construct the isomorphism from the family of moduli spaces of parabolic connections to the family of $A^{(1)*}_2$-surfaces. This construction is used to prove the main theorem. We also provide an explicit correspondence between points on $A^{(1)*}_2$-surfaces and parabolic connections. In Section 3, we explain how to define the action of $\widetilde{W}(E^{(1)}_6)$ on the family $\mcS\rightarrow \mcN$. After recalling the extended affine Weyl group $\widetilde{W}(E^{(1)}_6)$, we explain the action of $\widetilde{W}(E^{(1)}_6)$ on the Picard group of an $A^{(1)*}_2$-surface. This action leads to the actions on  $\mcN$ and $\mcS$. We summarize the explicit form of the action in the appendix. In Section 4, we realize the action on $\mcS$ as transformations of parabolic connections except for $w_3$, and state the main theorem. In Section 5, we review the theory of middle convolution by Simpson \cite{Sim}. In Section 6, we show the main theorem.

\section*{Acknowledgments}
The author is grateful to Koichi Takemura for suggesting the study of the realization of automorphism groups. He also thanks Shunya Adachi and Kazuki Hiroe for reading the introduction and providing useful comments. This work was supported by the Research Institute for Mathematical Sciences, an International Joint Usage/Research Center located in Kyoto University.

%%%%%%%%%%%%%%%%%%%%%%%%%%%%%%%%%%%%%%%%%%%%
%%%%%%%%%%%%%%%%%%%%%%%%%%%%%%%%%%%%%%%%%%%%
\section{$A^{(1)*}_2$-surfaces and moduli spaces of logarithmic connections}
%%%%%%%%%%%%%%%%%%%%%%%%%%%%%%%%%%%%%%%%%%%%
%%%%%%%%%%%%%%%%%%%%%%%%%%%%%%%%%%%%%%%%%%%%

%%%%%%%%%%%%%%%%%%%%%%%%%%%%%%%%%%%%%%%%%%%%
\subsection{$A^{(1)*}_2$-surfaces}\label{a12surf}
%%%%%%%%%%%%%%%%%%%%%%%%%%%%%%%%%%%%%%%%%%%%

Let $X$ be the surface obtained by blowing-up of the projective plane $\mathbb{P}^2$ at the following nine distinct points;
\[
\begin{tabular}{lll}
	$p_1: (1: -a_1 : 1)$,&$p_2: (1: -a_2 : 1)$, & $p_3: (1: -a_3 : 1)$, \\
	$p_4: (0: a_4 : 1)$,&$p_5: (0: a_5 : 1)$,& $p_9: (0: a_9 : 1)$, \\
	$p_6: (1: a_6 : 0)$,&$p_7: (1: a_7 : 0)$,&  $p_8: (1: a_8 : 0)$.
\end{tabular}
\]
Let $(x_0: x_1: x_2)$ be a homogeneous coordinate of $\mathbb{P}^2$ and set $\mcD_0:=\{x_0=0\}, \mcD_1:=\{x_0=x_2\}, \mcD_2:=\{x_2=0\}$. The cubic curve $\{x_0(x_0-x_2)x_2=0\}=\mcD_0\cup \mcD_1\cup \mcD_2$ passes through the above nine points.

\begin{lemma}
A cubic curve on $\mathbb{P}^2$ passing through $p_1, \ldots, p_9$ is only $\mcD_0\cup \mcD_1\cup\mcD_2$ if and only if $\sum_{i=1}^{9}a_i\neq0$.
\end{lemma}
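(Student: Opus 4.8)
The plan is to translate the statement into a dimension count for cubic \emph{forms}. Let $W\subseteq\mbC[x_0,x_1,x_2]_3$ be the $10$-dimensional space of homogeneous cubics and $W_{\mathbf p}\subseteq W$ the subspace of forms vanishing at $p_1,\dots,p_9$. Since $F_0:=x_0(x_0-x_2)x_2$ vanishes at all nine points, $\dim W_{\mathbf p}\ge 1$, and (because the three lines are distinct, so $F_0$ is reduced) the assertion ``the only cubic curve through $p_1,\dots,p_9$ is $\mcD_0\cup\mcD_1\cup\mcD_2$'' is equivalent to $W_{\mathbf p}=\mbC F_0$, i.e. to $\dim W_{\mathbf p}=1$. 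So I want to show $\dim W_{\mathbf p}=1$ precisely when $\sum_{i=1}^9 a_i\ne 0$.

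First I would restrict a general $F\in W_{\mathbf p}$ to each of the three lines. On $\mcD_0=\{x_0=0\}$ the restriction $F(0,x_1,x_2)$ is a binary cubic vanishing at $(a_4{:}1),(a_5{:}1),(a_9{:}1)$; since the nine points are distinct these are distinct, so $F(0,x_1,x_2)=c_{030}(x_1-a_4x_2)(x_1-a_5x_2)(x_1-a_9x_2)$, where $c_{030}$ is the coefficient of $x_1^3$ in $F$. The analogous statements hold on $\mcD_2=\{x_2=0\}$ and on $\mcD_1=\{x_0-x_2=0\}$, and in all three cases the leading coefficient is the \emph{same} number $c_{030}$ (geometrically: the three lines are concurrent at $(0{:}1{:}0)$). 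Hence if $c_{030}=0$ then $F$ is divisible by $x_0$, by $x_2$ and by $x_0-x_2$, so $F\in\mbC F_0$; thus $F\mapsto c_{030}$ is a linear functional on $W_{\mathbf p}$ with kernel $\mbC F_0$, giving $\dim W_{\mathbf p}\le 2$, with equality exactly when some $F\in W_{\mathbf p}$ has $c_{030}=1$.

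Next I would decide when such an $F$ exists by comparing the three prescribed restrictions coefficient by coefficient. With $c_{030}=1$, the restriction to $\mcD_0$ fixes $c_{021},c_{012},c_{003}$ as (signed) elementary symmetric functions of $a_4,a_5,a_9$, and the restriction to $\mcD_2$ fixes $c_{120},c_{210},c_{300}$ in terms of $a_6,a_7,a_8$; in particular $c_{021}=-(a_4+a_5+a_9)$ and $c_{120}=-(a_6+a_7+a_8)$. The restriction to $\mcD_1$ must equal $(x_1+a_1x_2)(x_1+a_2x_2)(x_1+a_3x_2)$, and the coefficient of $x_1^2x_2$ in $F(x_2,x_1,x_2)$ is $c_{120}+c_{021}$, so this forces $-(a_4+\cdots+a_9)=a_1+a_2+a_3$, i.e. $\sum_{i=1}^9 a_i=0$. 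Conversely, if $\sum a_i=0$ this condition is automatic, and the two remaining conditions (coefficients of $x_1x_2^2$ and $x_2^3$) determine $c_{111}$ and impose a single linear relation on $(c_{201},c_{102})$, hence are solvable; so a cubic $F\in W_{\mathbf p}\setminus\mbC F_0$ exists and $\dim W_{\mathbf p}=2$. This yields $\dim W_{\mathbf p}=1\iff\sum_{i=1}^9 a_i\ne0$.

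The only delicate point is the uniqueness of the binary-cubic restrictions, which is exactly where the hypothesis that $p_1,\dots,p_9$ are distinct enters: one needs $a_1,a_2,a_3$ pairwise distinct, and likewise $a_4,a_5,a_9$ and $a_6,a_7,a_8$, so that each restricted binary cubic is determined by its three roots and leading coefficient. Beyond that, everything reduces to bookkeeping of the ten coefficients $c_{ijk}$, and I do not expect any genuine obstacle.
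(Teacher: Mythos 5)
Your argument is correct and complete; note that the paper states this lemma without proof, so there is nothing to compare it against, but your route (a dimension count on the space $W_{\mathbf p}$ of cubic forms through the nine points, using the restrictions to the three concurrent lines and the functional $F\mapsto c_{030}$ with kernel $\mbC F_0$) is the natural one and all the coefficient bookkeeping checks out: the only nontrivial matching condition is the coefficient of $x_1^2x_2$ on $\mcD_1$, which gives exactly $\sum_{i=1}^9 a_i=0$, while the remaining two conditions are always solvable in $c_{111}$ and $(c_{201},c_{102})$. The distinctness of the nine points does reduce, as you say, to the pairwise distinctness within each triple $\{a_1,a_2,a_3\}$, $\{a_4,a_5,a_9\}$, $\{a_6,a_7,a_8\}$, since the three lines meet only at $(0{:}1{:}0)$, which is none of the $p_i$.
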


We call $X$ \textit{an $A^{(1)*}_2$-surface} if an effective anti-canonical divisor of $X$ is unique. This condition is equivalent to $\sum_{i=1}^{9}a_i\neq 0$. 

We consider isomorphism classes of configurations of nine points.  We say that two collections of nine points of the above are isomorphic to each other if one moves the other by an isomorphism of $\mbP^2$ preserving each three lines $\mcD_0, \mcD_1, \mcD_2$. Such an isomorphism is of the form $(x_0: x_1: x_2) \mapsto (x_0: \mu x_0+\nu x_1+\eta x_2 : x_2)$. We can see by computation that each isomorphism class of configurations of nine points has a unique element with 
\begin{equation}\label{ptcond}
a_1+a_2+a_3=a_4+a_5+a_9=0, \ a_6+a_7+a_8=1. 
\end{equation}

We will construct a family of $A^{(1)*}_2$-surfaces obtained by blowing-up of $\mbP^2$ at $p_1, \ldots, p_9$ with the condition (\ref{ptcond}). Set
\[
\mcN:=\left\{\boldsymbol{\nu}=(\nu_{i,j})_{j=0, 1, 2}^{i=0, 1, \infty} \in \mathbb{C}^9 \vb 
\begin{array}{l}
\text{$\nu_{i,0}+\nu_{i,1}+\nu_{i,2}=2\delta_{i,\infty}$ and $\nu_{i,0}\neq\nu_{i,1}\neq\nu_{i,2}\neq\nu_{i,0}$}\\
\text{for each $i=0, 1, \infty$}
\end{array}
\right\},
\]
where $\delta_{i, \infty}$ is the Kronecker delta.
Let $\mcS\rightarrow \mathbb{P}^2  \times \mcN$ be the blow-up along the nine sections $p_i \times \id \colon \mcN \rightarrow \mathbb{P}^2\times \mcN$;
\[
\begin{tabular}{lll}
	$p_1(\boldsymbol{\nu})=(1: \nu_{1,0}: 1)$,&$p_2(\boldsymbol{\nu})=(1: \nu_{1,1}: 1)$, & $p_3(\boldsymbol{\nu})=(1: \nu_{1,2}: 1)$, \\
	$p_4(\boldsymbol{\nu})=(0:-\nu_{0,0}: 1)$,&$p_5(\boldsymbol{\nu})=(0:-\nu_{0,1}: 1)$,& $p_9(\boldsymbol{\nu})=(0:-\nu_{0,2}: 1)$, \\
	$p_6(\boldsymbol{\nu})=(1: -\nu_{\infty,0}+1: 0)$,&$p_7(\boldsymbol{\nu})= (1: -\nu_{\infty,1}+1: 0)$,&  $p_8(\boldsymbol{\nu})=(1: -\nu_{\infty,2}+1: 0)$.
\end{tabular}
\]
The composition $\mcS\rightarrow \mathbb{P}^2  \times \mcN\rightarrow \mcN$ is then a family of $A^{(1)*}_2$-surfaces and the strict transform $\mcY$ of $\{x_0(x_0-x_2)x_2=0\} \times \mcN$ is a family of effective anti-canonical divisors. We will discuss the action of the extended affine Weyl group $\widetilde{W}(E^{(1)}_6)$ on $\mcS$ in Section \ref{symsec}.

%%%%%%%%%%%%%%%%%%%%%%%%%%%%%%%%%%%%%%%%%%%%
\subsection{Moduli space of parabolic logarithmic connections}\label{logcon}
%%%%%%%%%%%%%%%%%%%%%%%%%%%%%%%%%%%%%%%%%%%%
Take $n$ distinct points $t_1, \ldots, t_n \in \pl$ and set $D=[t_1]+\cdots+[t_n]$.
\begin{definition}
	 A logarithmic connection $(E,\nabla)$ of rank $r$ and degree $d$ over $(\pl, D)$ is a pair of a vector bundle $E$ of rank $r$ and degree $d$ on $\pl$ and a $\mathbb{C}$-homomorphism $\nabla \colon E\rightarrow E\otimes \Omega^1_{\pl}(D)$ satisfying Leibniz rule, i.e.,
	 \[
	 \nabla(fs)=s\otimes df+f\nabla(s)
	 \]
	 for any $f\in \Opl$ and $s\in E$. 
\end{definition}
Let $k(t_i)$ be the residue field.
For a logarithmic connection $(E, \nabla)$, let $\res_{t_i}(\nabla)$ be the endomorphism of $E|_{t_i}:=E\otimes k(t_i)$ induced by $\nabla$.
\begin{definition}\label{paraconn}
Take $\boldsymbol{\nu}=(\nu_{i,j})^{1\leq i\leq n}_{0\leq j \leq r-1} \in \mathbb{C}^{rn}$. A $\boldsymbol{\nu}$-parabolic logarithmic connection $(E,\nabla, l_*=\{l_{i,*}\}_{1 \leq i\leq n})$ of rank $r$ and degree $d$ over $(\pl, D)$ is a collection consisting of the following data;
\begin{itemize}\setlength{\itemsep}{0cm}
	\item[(1)] $(E, \nabla)$ is a logarithmic connection of rank $r$ and degree $d$ over $(\pl, D)$, 
	\item[(2)] $l_{i,*}$ is a filtration $E|_{t_i}:=E\otimes k(t_i)=l_{i,0} \supsetneq \cdots \supsetneq  l_{i,r-1} \supsetneq l_{i,r}=\{0\}$ by subspaces satisfying
	\[
	(\res_{t_i}(\nabla)-\nu_{i,j}\id )({l_{i,j}})\subset l_{i,j+1}
	\]
	for $1\leq i\leq n$ and $0\leq j\leq r-1$. We call $l_{i,*}$ a parabolic structure.
\end{itemize}
\end{definition}
\begin{remark}\label{remark}
(1)  For a $\boldsymbol{\nu}$-parabolic connection, $\nu_{i,j}$ is an eigenvalue of $\res_{t_i}(\nabla)$. We call $\nu_{i,j}$ is a local exponent of $\nabla$ at $t_i$. When $\nu_{i,j}\neq \nu_{i,k}$ for any $j\neq k$, a parabolic structure $l_{i,*}$ with (2) in Definition \ref{paraconn} is uniquely determined. In this paper, we consider the case $r=n=3$ and assume that $\nu_{i,0}, \nu_{i,1}, \nu_{i,2}$ are different. Then, parabolic structures may seem unnecessary since they are uniquely determined. The reason to consider parabolic structures is to show that the bijective maps of moduli spaces induced by transformations of connections are algebraic morphisms. 

(2)  A $\boldsymbol{\nu}$-parabolic logarithmic connection $(E,\nabla, l_*=\{l_{i,*}\}_{1 \leq i\leq n})$ satisfies the relation
\begin{equation}\label{Fuchs}
	\sum_{i=1}^n\sum_{j=0}^{r-1}\nu_{i,j}+\deg E=0.
\end{equation}
\end{remark}

We consider the case $n=3$ and $D=[0]+[1]+[\infty]$. Set
\[
\mcN^0:=\left\{\boldsymbol{\nu}=(\nu_{i,j})_{j=0, 1, 2}^{i=0, 1, \infty} \in \mcN \vb \text{$\nu_{0,j_0}+\nu_{1,j_1}+\nu_{\infty,j_\infty}\notin \mathbb{Z}$ for any $j_0, j_1, j_\infty$} \right\}, 
\]
where $\mcN$ is the set defined in the previous subsection. For $\boldsymbol{\nu} \in \mcN^{0}$, $\boldsymbol{\nu}$-parabolic connections $(E, \nabla)$ are irreducible, that is, there is no subbundle $F$ such that $\nabla(F)\subset F\otimes \Omega_{\pl}^1(D)$. In fact, if a subbundle $F\subset E$ is preserved by $\nabla$, then we can see that there are $j_0, j_1, j_\infty\in \{0, 1,2\}$ such that $\nu_{0,j_0}+\nu_{1,j_1}+\nu_{\infty,j_\infty}\in \mathbb{Z}$ by applying (\ref{Fuchs}) to the subconnection $(F, \nabla)$, which contradicts the condition of $\mcN^{0}$. Since any $\boldsymbol{\nu}$-parabolic connection is irreducible, we don't have to introduce stability conditions when considering moduli spaces.

Let $M(\boldsymbol{\nu})$ be the moduli space of $\boldsymbol{\nu}$-parabolic logarithmic connections and $\mcM\rightarrow \mcN^0$ be a family of moduli spaces of logarithmic connections whose fiber at $\boldsymbol{\nu}$ is $M(\boldsymbol{\nu})$. The existence of such moduli spaces and such a family is proved by Inaba \cite{In}. It is also shown that the moduli space is fine in his paper.

\begin{theorem}\label{varphi}(Theorem 3.1 in \cite{Ma})
There is an isomorphism $\varphi: \mcM \longrightarrow \mcS \setminus \mcY$ over $\mcN^0$.
\[
\begin{tikzcd}
\mcM\arrow[rr, "\varphi"]\arrow[rd]&&\mcS\setminus \mcY\arrow[ld]\\
&\mcN^0&
\end{tikzcd}
\]
\end{theorem}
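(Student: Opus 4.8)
The plan is to construct the isomorphism $\varphi$ explicitly by writing down, for each $\boldsymbol{\nu}$-parabolic connection $(E,\nabla,l_*)$, a point of $\mcS\setminus\mcY$ and checking that this assignment is an algebraic morphism over $\mcN^0$ with an algebraic inverse. The starting observation is that since $\boldsymbol{\nu}\in\mcN^0$ forces irreducibility (as noted in the excerpt), and since the underlying bundle $E$ has rank $3$ and degree $d=-\sum_{i,j}\nu_{i,j}$ by \eqref{Fuchs}, Grothendieck's theorem splits $E=\Opl(e_1)\oplus\Opl(e_2)\oplus\Opl(e_3)$. I would first show that for connections in the moduli space the splitting type is generically trivial (up to the fixed degree twist), using irreducibility to rule out too-unbalanced splittings, exactly as one does in the rank two $D^{(1)}_4$ story of Inaba–Iwasaki–Saito. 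On the trivial (or near-trivial) bundle, $\nabla = d + A$ with $A$ a matrix of logarithmic one-forms having prescribed residues $A_i$ at $0,1,\infty$ whose eigenvalues are the $\nu_{i,j}$; the apparent singularity / spectral data of this Fuchsian system gives the coordinates on $\mbP^2$, and the parabolic structure (equivalently, the choice of eigendirections of the residues) together with one more modulus gives the point on the blow-up $X$.

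The key steps, in order, would be: (i) normalize the bundle and gauge to bring $(E,\nabla,l_*)$ to a canonical form with a minimal number of parameters — this is where the nine blown-up points $p_1,\dots,p_9$ enter, since the residual eigendirections at $0,1,\infty$ (three points each) are forced to lie on the three lines $\mcD_0,\mcD_1,\mcD_2$, and the condition \eqref{ptcond} matches the trace conditions $\nu_{i,0}+\nu_{i,1}+\nu_{i,2}=2\delta_{i,\infty}$; (ii) extract from the normal form a morphism $\varphi\colon M(\boldsymbol\nu)\to X_{\boldsymbol\nu}$ landing in the complement of the anti-canonical divisor $\mcY$, and check it is injective by reconstructing $\nabla$ from its image; (iii) construct the inverse: given a point of $\mcS\setminus\mcY$, build a rank three logarithmic connection with the prescribed local data, using that being off $\mcY$ guarantees the Fuchsian system exists and is irreducible; (iv) verify that both maps are algebraic and compatible with the base $\mcN^0$, so that they glue to an isomorphism of families. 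Because the moduli space is fine (Inaba), it suffices to exhibit the morphism on the universal family, and then matching dimensions (both sides are two-dimensional over each point of $\mcN^0$) plus bijectivity gives the isomorphism.

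The main obstacle I expect is step (i)–(iii): pinning down the precise dictionary between the accessory parameters of the normalized Fuchsian system and the affine coordinates on the blow-up $X$, in particular showing that the locus where the connection fails to exist or to be irreducible is \emph{exactly} the anti-canonical divisor $\mcY$ and nothing more. This requires a careful analysis of the degeneration of the bundle type (jumps from the trivial splitting to $\Opl(1)\oplus\Opl\oplus\Opl(-1)$, etc.) and of reducibility, matching each degeneration with a component of $\mcY$ (the strict transforms of $\mcD_0,\mcD_1,\mcD_2$) or with the exceptional curves. Since this is precisely the content of Theorem 3.1 of \cite{Ma}, I would cite that computation rather than reproduce it, and in the present paper I would only need the \emph{existence} of $\varphi$ together with enough of its explicit shape (the normal form of $\nabla$) to trace transformations of connections through it in the later sections.
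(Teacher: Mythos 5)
Your overall strategy---coordinatize $M(\boldsymbol{\nu})$ by an apparent singularity together with an accessory parameter, map to $\mbP^2$, lift to the blow-up, identify the image as the complement of the anticanonical divisor, and defer the hard computation to \cite{Ma}---is indeed the route the paper takes (the paper likewise only sketches the construction of $\varphi$ and cites Theorem 3.1 of \cite{Ma}). But two concrete points in your sketch would fail as written. First, the splitting type is not ``generically trivial up to a fixed degree twist'': by (\ref{Fuchs}) and the normalization $\nu_{i,0}+\nu_{i,1}+\nu_{i,2}=2\delta_{i,\infty}$ the degree is $-2$, which is not divisible by $3$, so $E$ is never a twist of the trivial bundle. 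What the paper uses is that irreducibility forces $E\cong\Opl\oplus\Opl(-1)\oplus\Opl(-1)$ for \emph{every} point of $M(\boldsymbol{\nu})$, together with the canonical filtration $E=E_0\supsetneq E_1\supsetneq E_2\supsetneq 0$ of Proposition \ref{appfil}; the apparent singularity is then defined intrinsically as the zero of $u\colon E_1/E_2\to (E/E_1)\otimes\Ompld$, not by trivializing the bundle and reading off data of a Fuchsian system. This intrinsic definition and the constancy of the bundle type are what make $\App$ a morphism on all of $\mcM$ rather than on a dense open subset; with only generic control of the splitting, your step (i) leaves $\varphi$ undefined along a closed locus, and your expectation that jumps of bundle type account for components of $\mcY$ is not what happens (the type never jumps; $\mcY$ is the locus where no connection with the given apparent-singularity data exists, as you correctly guess in step (iii)).

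Second, the lift to the blow-up is not supplied by the parabolic structure: since the $\nu_{i,j}$ are pairwise distinct at each pole, $l_*$ is uniquely determined by $(E,\nabla)$ (Remark \ref{remark}) and carries no moduli. The second affine coordinate is the dual parameter $p$, read off from the induced map $h\colon (E/E_1)|_q\to(E/E_1\otimes\Ompld)|_q$ in (\ref{h1diag}), and the pair $(q,p)$ already exhausts the two dimensions of $M(\boldsymbol{\nu})$; the factorization through the blow-up at $p_1,\dots,p_9$ is a property of the resulting morphism $\mcM\to\mbP^2$, with the extra datum over each $p_i$ again extracted from $\nabla$ rather than from $l_*$. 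These are exactly the ingredients you would need to import from \cite{Ma} to make steps (i)--(iii) precise.
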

This theorem is proved by constructing a morphism $\mcM\rightarrow \mbP^2\times \mcN^0$ and lifting it to $\mcM\rightarrow \mcS\setminus \mcY$. The morphism $\mcM\rightarrow \mbP^2$ is constructed as follows.

 First, we define the apparent singularity of $(E, \nabla,l_*)\in \mcM$. For $\boldsymbol{\nu} \in \mcN$, a $\boldsymbol{\nu}$-parabolic connection has degree $-2$ by the relation (\ref{Fuchs}). When $\boldsymbol{\nu} \in \mcN^0$, the underlying bundle of a $\boldsymbol{\nu}$-logarithmic connection is isomorphic to $\mcO_{\pl}\oplus \mcO_{\pl}(-1) \oplus \mcO_{\pl}(-1)$ by the irreducibility of connections (for example, see Proposition 3.5 in \cite{Ma}).
\begin{proposition}\label{appfil}(Proposition 3.7 in \cite{Ma})
	For each $(E, \nabla,l_*)\in \mcM$,  there exists a unique filtration $E=E_0\supsetneq E_1\supsetneq E_2\supsetneq E_3=0$ by subbundles such that 
	\begin{equation}\label{filcon1}
		E_1\cong \Opl\oplus \Opl(-1), \  E_2\cong \Opl, \ \nabla(E_2)\subset E_1\otimes \Omega_{\pl}^1(D).
	\end{equation}
\end{proposition}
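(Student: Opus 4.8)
The plan is to build the two intermediate subbundles $E_1,E_2$ canonically out of $(E,\nabla,l_*)$ and then read off uniqueness. Throughout I use that, for $\boldsymbol{\nu}\in\mcN^0$, the underlying bundle is $E\cong\Opl\oplus\Opl(-1)\oplus\Opl(-1)$ as recalled above, and that $\Ompld\cong\Opl(1)$ (as $\deg D=3$), so $E\otimes\Ompld\cong\Opl(1)\oplus\Opl\oplus\Opl$ and, once a degree-zero line subbundle $E_2\subset E$ is fixed, $(E/E_2)\otimes\Ompld\cong\Opl\oplus\Opl$. First I would construct $E_2$. Since $h^0(\Opl)=1$ and $h^0(\Opl(-1))=0$, we have $\Hom(\Opl,E)=H^0(E)=\mathbb{C}$, and a nonzero global section is, in any splitting, a nowhere-vanishing section of the $\Opl$-summand, hence realizes $\Opl$ as a subbundle of $E$. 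Let $E_2\subset E$ be the image of the evaluation map $H^0(E)\otimes_{\mathbb{C}}\Opl\to E$; it is canonical and $E_2\cong\Opl$. Conversely, any line subbundle $L\subset E$ with $L\cong\Opl$ gives a nonzero map $\Opl\to E$, so $L=E_2$; thus $E_2$ is the unique such subbundle, and in particular $E_2\supsetneq E_3=0$.

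Next I would construct $E_1$. The composite $\bar\nabla\colon E_2\hookrightarrow E\xrightarrow{\nabla}E\otimes\Ompld\twoheadrightarrow(E/E_2)\otimes\Ompld$ is $\Opl$-linear, because in $\nabla(fs)=s\otimes df+f\nabla(s)$ the Leibniz term $s\otimes df$ already lies in $E_2\otimes\Ompld$. It is nonzero: if $\bar\nabla=0$ then $\nabla(E_2)\subset E_2\otimes\Ompld$, making $E_2$ a $\nabla$-invariant subbundle and contradicting the irreducibility of $(E,\nabla)$ recalled in Section~\ref{logcon} (equivalently, $(\ref{Fuchs})$ applied to the rank-one subconnection $(E_2,\nabla|_{E_2})$ would force $\nu_{0,j_0}+\nu_{1,j_1}+\nu_{\infty,j_\infty}\in\mathbb{Z}$ for some indices). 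Being a nonzero map $\Opl\to\Opl\oplus\Opl$, $\bar\nabla$ is given by a constant nonzero vector, so its image is a trivial, hence saturated, sub-line-bundle; twisting by $\Ompld^{-1}\cong\Opl(-1)$ turns it into a sub-line-bundle $L\cong\Opl(-1)$ of $E/E_2\cong\Opl(-1)\oplus\Opl(-1)$. Let $E_1\subset E$ be the preimage of $L$ under $E\twoheadrightarrow E/E_2$; then $E\supsetneq E_1\supsetneq E_2$, $E_1/E_2=L\cong\Opl(-1)$, and $\nabla(E_2)\subset E_1\otimes\Ompld$ by construction. Since $\textrm{Ext}^1(\Opl(-1),\Opl)=H^1(\pl,\Opl(1))=0$, the extension $0\to\Opl\to E_1\to\Opl(-1)\to0$ splits, so $E_1\cong\Opl\oplus\Opl(-1)$; consequently $E/E_1$ is a line bundle of degree $-1$ and $E\supsetneq E_1$. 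Taking $E_0=E$ and $E_3=0$ yields a filtration satisfying $(\ref{filcon1})$.

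For uniqueness, suppose $E_1',E_2'$ also satisfy $(\ref{filcon1})$. Then $E_2'\cong\Opl$ is a line subbundle of $E$, so $E_2'=E_2$ by the above. The inclusion $\nabla(E_2)\subset E_1'\otimes\Ompld$ gives $\bar\nabla(E_2)\subset(E_1'/E_2)\otimes\Ompld$, i.e.\ $L\subseteq E_1'/E_2$ inside $E/E_2$; but $E_1'/E_2$ is a rank-one subbundle of $E/E_2$ of degree $\deg E_1'-\deg E_2=-1=\deg L$, and a degree-zero inclusion of line bundles on $\pl$ is an isomorphism, so $E_1'/E_2=L$ and hence $E_1'=E_1$. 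The only step that is not formal bookkeeping on $\pl$ is the non-vanishing of $\bar\nabla$, which is exactly where the irreducibility hypothesis (the defining property of $\mcN^0$) enters, and I expect that observation to be the crux; I would also note that the same construction applies relatively over $\mcM$, since $E_2$ and $L$ arise as images of the evaluation map and of $\bar\nabla$ of locally constant fiberwise rank, so the filtration varies algebraically in families.
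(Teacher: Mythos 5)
Your argument is correct and complete: the unique trivial sub-line-bundle $E_2=\mathrm{im}(H^0(E)\otimes\Opl\to E)$, the $\Opl$-linearity and (by irreducibility, which is exactly where $\boldsymbol{\nu}\in\mcN^0$ enters) non-vanishing of the induced map $\bar\nabla\colon E_2\to(E/E_2)\otimes\Ompld\cong\Opl^{\oplus 2}$, and the degree count forcing $L=E_1'/E_2$ in the uniqueness step are all sound. Note that the paper itself gives no proof of this proposition — it only cites Proposition 3.7 of \cite{Ma} — so there is nothing internal to compare against; your construction is the natural one and is consistent with how the paper subsequently uses the filtration (e.g.\ in Section \ref{normform}, where $E_2=\langle e^{(0)}_0\rangle$ and $E_1=\langle e^{(0)}_0,e^{(0)}_1\rangle$ are read off from the normal form exactly as your recipe predicts).
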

The homomorphism
\begin{equation}\label{apphom}
u: \Opl(-1)\cong E_1/E_2\overset{\nabla}{\rightarrow} (E/E_1)\otimes \Omega_{\pl}^1(D) \cong \Opl,
\end{equation}
is not zero since $(E, \nabla,l_*)$ is irreducible.
\begin{definition}
We call the zero of the homomorphism (\ref{apphom}) \textit{the apparent singularity} of $(E, \nabla,l_*)$. We denote it by $\App(E,\nabla,l_*)$.
\end{definition}
The apparent singularity defines a morphism $\App \colon \mcM\rightarrow \mbP^1$. 

Next, we construct a morphism $ \mcM\rightarrow \mathbb{P}(\Omega_{\pl}^1(D) \oplus \Opl)$. Take $(E, \nabla, l_*)\in \mcM$ and put $q:=\App(E, \nabla, l_*)$. 
Let $\pi \colon E \rightarrow E/E_1$ be the quotient and let us fix an isomorphism $E/E_1\cong \Opl(-t_3)$. We define a homomorphism $B \colon E \rightarrow  E/E_1 \otimes \Omega_{\pl}^1(D)$ by $B(a):=(\pi \otimes \id) \nabla(a)-d(\pi(a))$ for $a \in E$, where $d$ is the canonical connection on $\Opl(-t_3)$. Since $\nabla(E_2) \subset E_1 \otimes \Omega_{\pl}^1(D)$ and $u_q=0$, $B_q$ induces a homomorphism $h \colon (E/E_1)|_q \rightarrow (E/E_1\otimes\Omega_{\pl}^1(D))|_q$ which makes the diagram 
\begin{equation}\label{h1diag}
	\begin{tikzcd}
		0 \arrow[r]&E_1|_q\arrow[r]\arrow[rd,"0"]&E|_q \arrow[r]\arrow[d, "B_q"]&(E/E_1)|_q\arrow[r] \arrow[ld, "h"]&0 \\
		&&(E/E_1\otimes \Omega_{\pl}^1(D))|_q&&
	\end{tikzcd}
\end{equation}
commute. Then $h$ determines a homomorphism 
\[
\iota \colon (E/E_1)|_q \longrightarrow ((E/E_1\otimes\Ompld)\oplus E/E_1)|_q, \quad a \mapsto (h(a), a).
\]
Since $\iota$ is injective, the map $\iota$ determines a point $\phi(E, \nabla, l_*)$ of $\mathbb{P}(\Ompld \oplus \Opl)$. We can see that the map 
\begin{equation}\label{phidef}
	\phi \colon \mcM\longrightarrow \mathbb{P}(\Omega_{\pl}^1(D) \oplus \Opl)
\end{equation}
is a morphism. We find that the image of $\phi$ and the infinite section of $\mathbb{P}(\Omega_{\pl}^1(D) \oplus \Opl)$ don't intersect by the construction of $\phi$. Since $\Omega_{\pl}^1(D) \cong \Opl(1)$, we have a morphism $\mathbb{P}(\Omega_{\pl}^1(D) \oplus \Opl) \rightarrow \mbP^2$ by contructing the infinite section.  We hence get a morphism $\mcM \rightarrow  \mbP^2$ by composing the morphism $\phi$ with the construction. 

We will see the explicit correspondence between points on $M(\boldsymbol{\nu})$ and $S_{\boldsymbol{\nu}}\setminus Y_{\boldsymbol{\nu}}$ in the next subsection.

%%%%%%%%%%%%%%%%%%%%%%%%%%%%%%%%%%%%%%%%%%%%
\subsection{Correspondence between points on the $A^{(1)*}_2$-surface and  parabolic connections}\label{normform}
%%%%%%%%%%%%%%%%%%%%%%%%%%%%%%%%%%%%%%%%%%%%
We provide an explicit correspondence to the $A^{(1)*}_2$-surface $S_{\boldsymbol{\nu}}$ and the moduli space $M(\boldsymbol{\nu})$ when $\boldsymbol{\nu} \in \mcN^0$. The underlying bundle of a $\boldsymbol{\nu}$-parabolic connection is of the form $\mcO_{\pl}\oplus \mcO_{\pl}(-1) \oplus \mcO_{\pl}(-1)$, and its parabolic structure is uniquely determined by definition. We give the normal form of connections.

Let $(c_0:c_1)$ be a homogeneous coordinate of $\pl$ and let $U_0:=\{c_0\neq 0\}\subset \pl, \ U_\infty:=\{c_1\neq 0\} \subset \pl, \ z:=c_1/c
_0, \ w:=c_0/c_1.$
We take a local basis $e^{(0)}_0, e^{(0)}_1, e^{(0)}_2$ (resp. $e^{(\infty)}_0, e^{(\infty)}_1, e^{(\infty)}_2$) of $\Opl\oplus\Opl(-1)\oplus\Opl(-1)$ on $U_0$ (resp. on $U_\infty$) satisfying 
$e^{(0)}_0=e^{(\infty)}_0$, $e^{(0)}_1=\frac{1}{w}e^{(\infty)}_1$,$e^{(0)}_2=\frac{1}{w}e^{(\infty)}_2$. For simplicity of notation, we write $\nabla=d+A$ on $U_0$ (resp. on $U_\infty$) instead of $\nabla(e^{(0)}_0, e^{(0)}_1, e^{(0)}_2)=(e^{(0)}_0, e^{(0)}_1, e^{(0)}_2)A$ (resp. $\nabla(e^{(\infty)}_0, e^{(\infty)}_1, e^{(\infty)}_2)=(e^{(\infty)}_0, e^{(\infty)}_1, e^{(\infty)}_2)A$). The correspondence is as follows;
\begin{equation}\label{normformeq}
(q:p:1)\longleftrightarrow \nabla=d+
\begin{pmatrix}
	0& a_{12}(z)& a_{13}(z)\\
	1& -p& 0\\
	0& z-q& p
\end{pmatrix}
\frac{dz}{z(z-1)}\quad \text{on $U_0$},
\end{equation}
\[
(1:p':q')\longleftrightarrow 
\nabla=d+
\begin{pmatrix}
0& b_{12}(w)& b_{13}(w)\\
1& w-1-p'& 0\\
0& w-q'& w-1+p'
\end{pmatrix}
\frac{dw}{w(w-1)} \quad \text{on $U_\infty$}.
\]
Here $a_{12}(z), a_{13}(z), b_{12}(w), b_{13}(w)$ are the quadratic polynomials satisfying
\begin{align*}
a_{12}(0)=&-p^2-(\nu_{0,0}\nu_{0,1}+\nu_{0,1}\nu_{0,2}+\nu_{0,2}\nu_{0,0}),\\
a_{12}(1)=&-p^2-(\nu_{1,0}\nu_{1,1}+\nu_{1,1}\nu_{1,2}+\nu_{1,2}\nu_{1,0}),\\
\lim_{z\rightarrow \infty}a_{12}(z)/z^2=&1-(\nu_{\infty,0}\nu_{\infty,1}+\nu_{\infty,1}\nu_{\infty,2}+\nu_{\infty,2}\nu_{\infty,0}),\\
qa_{13}(0)=&(p+\nu_{0,0})(p+\nu_{0,1})(p+\nu_{0,2}),\\
(q-1)a_{13}(1)=&(p-\nu_{1,0})(p-\nu_{1,1})(p-\nu_{1,2}),\\
\lim_{z\rightarrow \infty}a_{13}(z)/z^2=&(1-\nu_{\infty,0})(1-\nu_{\infty,1})(1-\nu_{\infty,2}),\\
b_{12}(0)=&1-p'^2-(\nu_{\infty,0}\nu_{\infty,1}+\nu_{\infty,1}\nu_{\infty,2}+\nu_{\infty,2}\nu_{\infty,0}),\\
b_{12}(1)=&-p'^2-(\nu_{1,0}\nu_{1,1}+\nu_{1,1}\nu_{1,2}+\nu_{1,2}\nu_{1,0}),\\
\lim_{w\rightarrow \infty}b_{12}(w)/w^2=&-(\nu_{0,0}\nu_{0,1}+\nu_{0,1}\nu_{0,2}+\nu_{0,2}\nu_{0,0}),\\
q'b_{13}(0)=&(p'-1+\nu_{\infty,0})(p'-1+\nu_{\infty,1})(p'-1+\nu_{\infty,2}),\\
(q'-1)b_{13}(1)=&(p'-\nu_{1,0})(p'-\nu_{1,1})(p'-\nu_{1,2}),\\
\lim_{w\rightarrow \infty}b_{13}(w)/w^2=&\nu_{0,0}\nu_{0,1}\nu_{0,2}.
\end{align*}
We can see that the parabolic connections corresponding to $(q:p:1)$ and $(1:p':q')$ are isomorphic to each other if and only if $q'=q^{-1}$ and $p'=pq^{-1}$. 

Let $E_0\supsetneq E_1\supsetneq E_2\supsetneq E_3$ be the filtration in Proposition \ref{appfil}. We can find by the form of $\nabla$ that $E_2$ is generated by $e^{(0)}_0$, and  $E_1$ is generated by $e^{(0)}_0, e^{(0)}_1$. The homomorphism $u$ in (\ref{apphom}) is given by $u(e^{(0)}_1)=(z-q)e^{(0)}_2$. Thus $q$ is the apparent singularity. The homomorphism $h$ in (\ref{phidef}) is given by $h(e^{(0)}_2)=e^{(0)}_2\otimes p\tfrac{dz}{z(z-1)}$. So $p$ is characterized as the coefficient of $\frac{dz}{z(z-1)}$. We call $p$ the \textit{dual parameter}. We use this characterization in the proof of the main theorem.

%%%%%%%%%%%%%%%%%%%%%%%%%%%%%%%%%%%%%%%%%%%%
%%%%%%%%%%%%%%%%%%%%%%%%%%%%%%%%%%%%%%%%%%%%
\section{Symmetry of $A^{(1)*}_2$-surfaces}\label{symsec}
%%%%%%%%%%%%%%%%%%%%%%%%%%%%%%%%%%%%%%%%%%%%
%%%%%%%%%%%%%%%%%%%%%%%%%%%%%%%%%%%%%%%%%%%%

%%%%%%%%%%%%%%%%%%%%%%%%%%%%%%%%%%%%%%%%%%%%
\subsection{The extended affine Weyl group $\widetilde{W}(E^{(1)}_6)$}\label{E6}
%%%%%%%%%%%%%%%%%%%%%%%%%%%%%%%%%%%%%%%%%%%%
The extended affine Weyl group $\widetilde{W}(E^{(1)}_6)$ is the semi-product  $\Aut(E^{(1)}_6) \ltimes W(E^{(1)}_6)$. The affine Weyl group $W(E^{(1)}_6)$ is defined as follows;
\[
W(E^{(1)}_6):=\left\langle  w_0, \ldots, w_6 \vb
\begin{array}{cc}
	w_i^2=1& i=0, \cdots, 6\\
	w_iw_j=w_jw_i&\text{$i$ and $j$ are not joined}\\
	w_iw_jw_i=w_jw_iw_j&\text{$i$ and $j$ are joined}
\end{array}
\right\rangle. 
\]
Here $i$ and $j$ mean the nodes of the Dynkin diagram of type $E^{(1)}_6$ in (\ref{aute}).
The automorphism group $\Aut(E^{(1)}_6)$ of the Dynkin diagram of type $E^{(1)}_6$  is generated by $\sigma_{1}$ and $\sigma_{2}$ defined as follows.
\begin{equation}\label{aute}
\centering
\includegraphics[width=7.4cm]{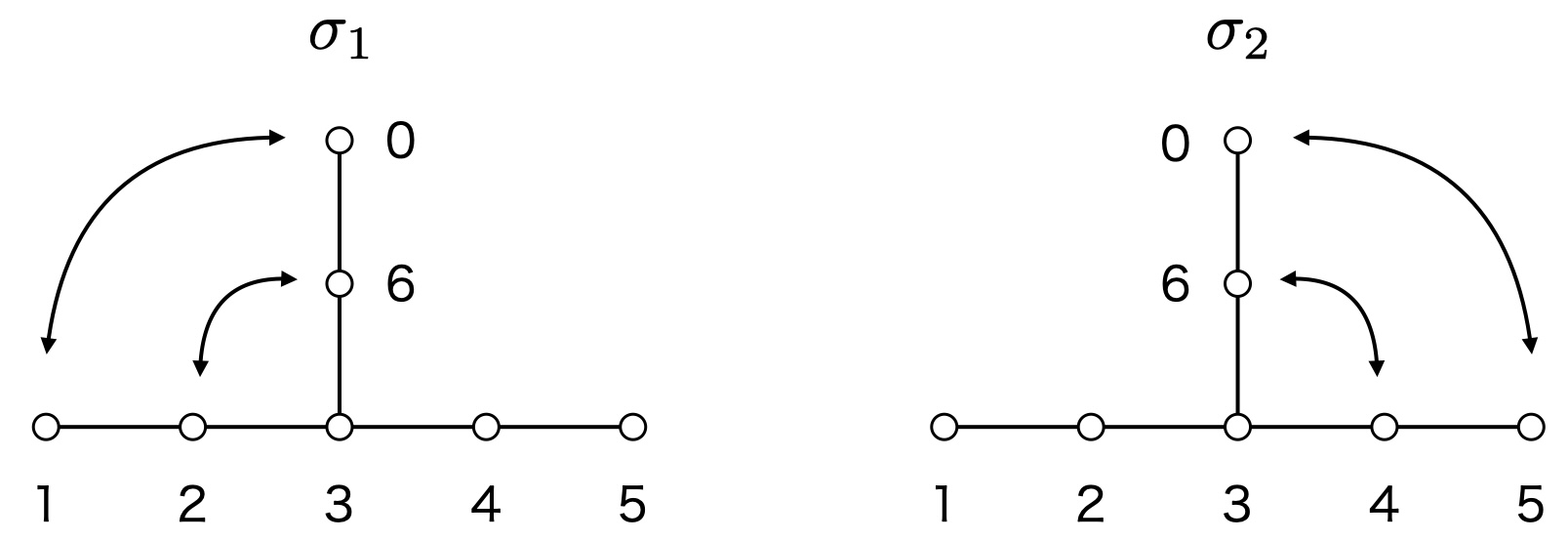}
\end{equation}
%and is isomorphic to the symmetric group $S_3$ of degree three.
We note that $\Aut(E^{(1)}_6)$ is also the autmorphism group of the Dynkin diagram of type $A^{(1)}_2$. We define automorphisms corresponding to $\sigma_{1}$ and $\sigma_{2}$ as follows.
\begin{equation}\label{auta}
\centering
\includegraphics[width=46mm]{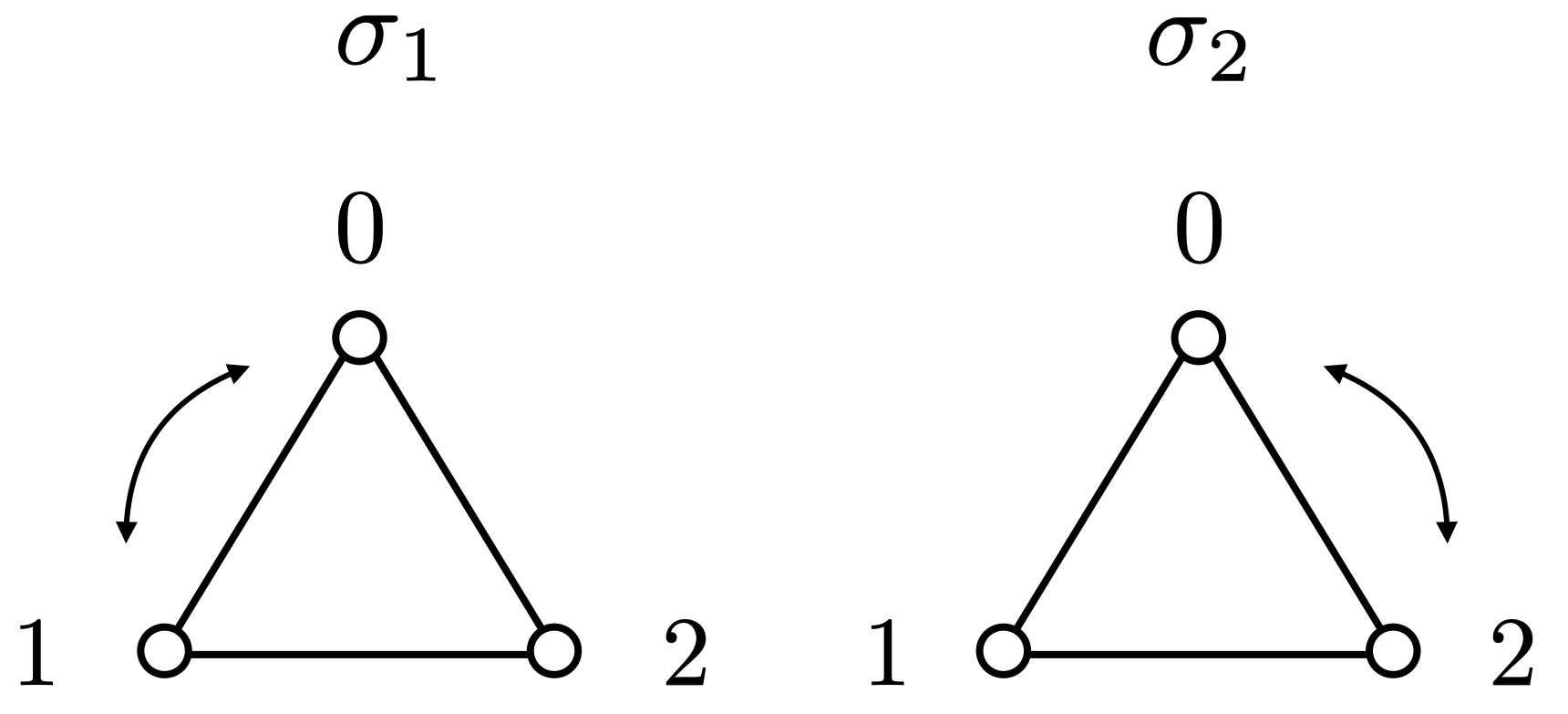}
\end{equation}

%%%%%%%%%%%%%%%%%%%%%%%%%%%%%%%%%%%%%%%%%%%%
\subsection{Action of $\widetilde{W}(E^{(1)}_6)$ on $\Pic (S_{\boldsymbol{\nu}})$}
%%%%%%%%%%%%%%%%%%%%%%%%%%%%%%%%%%%%%%%%%%%%

The surface $S_{\boldsymbol{\nu}}$ is obtained by blowing up $\mbP^2$ at the nine points $p_1(\boldsymbol{\nu}), \ldots, p_9(\boldsymbol{\nu})$ (see section \ref{a12surf}). 
Let $\mcE_0$ be a line on $\mbP^2$ and $\mcE_i$ be the exceptional divisor of $S_{\boldsymbol{\nu}}$ at $p_i(\boldsymbol{\nu})$ for $i=1, \ldots, 9$. The Picard group $\Pic (S_{\boldsymbol{\nu}})$ then has the decomposition $\Pic (S_{\boldsymbol{\nu}})=\bigoplus_{i=0}^9\mbZ \mcE_i$. 

First, we consider the action of $W(E^{(1)}_6)$ on $\Pic (S_{\boldsymbol{\nu}})$. Set

\begin{equation}
	\begin{array}{lll}
		\alpha_1=\mcE_2-\mcE_3, & \alpha_2=\mcE_1-\mcE_2, &\alpha_3=\mcE_0-\mcE_1-\mcE_4-\mcE_6, \\
		\alpha_4=\mcE_6-\mcE_7, & \alpha_5=\mcE_7-\mcE_8, &\alpha_6=\mcE_4-\mcE_5, \quad \alpha_0=\mcE_5-\mcE_9.
	\end{array}
\end{equation}
The sublattice generated by $\alpha_0, \cdots, \alpha_6$ is isomorphic to the root lattice of type $E^{(1)}_6$. We denote it by $Q(E^{(1)}_6)$, i.e., $Q(E^{(1)}_6):=\sum_{i=0}^6\mbZ \alpha_i$. The affine Weyl group $W(E^{(1)}_6)$ is isomorphic to the group generated by the reflections of $\alpha_0, \cdots, \alpha_6$ on $Q(E^{(1)}_6)$. The action of $W(E^{(1)}_6)$ on $Q(E^{(1)}_6)$ is naturally extended on $\Pic (S_{\boldsymbol{\nu}})$, that is, $W(E^{(1)}_6)$ acts on $\Pic (S_{\boldsymbol{\nu}})$ by $w_i(\mcD)=\mcD+(\mcD\cdot \alpha_i)\alpha_i$.

Next, we consider the action of $\Aut(E^{(1)}_6)$ on $\Pic (S_{\boldsymbol{\nu}})$. We denote the strict transform of the line $\mcD_i$ in Section \ref{a12surf} by the same character. $D_0, D_1, D_2$ are written as elements in $\Pic (S_{\boldsymbol{\nu}})$ as follows;
\[
\mcD_{0}=\mcE_0-\mcE_4-\mcE_5-\mcE_9, \quad \mcD_{1}=\mcE_0-\mcE_1-\mcE_2-\mcE_3, \quad \mcD_{2}=\mcE_0-\mcE_6-\mcE_7-\mcE_8.
\]
The sublattice generated by $\mcD_0, \mcD_1, \mcD_2$ is isomorphic the root lattice of type $A^{(1)}_2$. We denote it by $Q(A^{(1)}_2)$, i.e., $Q(A^{(1)}_2):=\sum_{i=0}^2\mbZ \mcD_i$. We can see that $Q(E^{(1)}_6)+Q(A^{(1)}_2)=\Pic (S_{\boldsymbol{\nu}})$ and $Q(E^{(1)}_6) \cap Q(A^{(1)}_2) =\mbZ \delta$, where
\[
\delta=\alpha_1+2\alpha_2+3\alpha_3+2\alpha_4+\alpha_5+2\alpha_6+\alpha_0=\mcD_{0}+\mcD_{1}+\mcD_{2}.
\]
We set $\sigma_i(\alpha_j)=\alpha_{\sigma_i(j)}$ (resp. $\sigma_i(\mcD_k)=\mcD_{\sigma_i(k)}$) by the figure (\ref{aute}) (resp. (\ref{auta})). Then $\sigma_i$ gives automorphisms of $Q(E^{(1)}_6)$ and $Q(A^{(1)}_2)$. This action extends to $Q(E^{(1)}_6)+Q(A^{(1)}_2)=\Pic (S_{\boldsymbol{\nu}})$, because it defines the same action on $Q(E^{(1)}_6) \cap Q(A^{(1)}_2)$. In fact, we can check
\[
\sigma_i(\alpha_1)+2\sigma_i(\alpha_2)+3\sigma_i(\alpha_3)+2\sigma_i(\alpha_4)+\sigma_i(\alpha_5)+2\sigma_i(\alpha_6)+\sigma_i(\alpha_7)=\sigma_i(\mcD_0)+\sigma_i(\mcD_1)+\sigma_i(\mcD_2)
\]
for $i=1,2$. Therefore $\sigma_i$ provides an autmorphism of $\Pic (S_{\boldsymbol{\nu}})$. We find that these automorphisms define an action of $\Aut(E^{(1)}_6)$ on $\Pic (S_{\boldsymbol{\nu}})$ by the construction. 

In the appendix, we give variations of the basis $\mcE_0, \ldots, \mcE_9$ by generators of $\widetilde{W}(E^{(1)}_6)$. 

%%%%%%%%%%%%%%%%%%%%%%%%%%%%%%%%%%%%%%%%%%%%
\subsection{Action of $\widetilde{W}(E^{(1)}_6)$ on $\mcN^0$}
%%%%%%%%%%%%%%%%%%%%%%%%%%%%%%%%%%%%%%%%%%%%

The action of $\widetilde{W}(E^{(1)}_6)$ on $\mcN^0$ comes from period integrals. We consider a meromorphic two-form on $S_{\boldsymbol{\nu}}$ with a simple pole along the effective anti-canonical divisor $Y_{\boldsymbol{\nu}}$. Since $H^0(S_{\boldsymbol{\nu}}, \Omega_{S_{\boldsymbol{\nu}}}^2(Y_{\boldsymbol{\nu}})) \cong H^0(S_{\boldsymbol{\nu}}, \mcO_{S_{\boldsymbol{\nu}}})\cong \mathbb{C}$,  there is a unique meromorphic two-form $\omega$ in  $H^0(S_{\boldsymbol{\nu}}, \Omega_{S_{\boldsymbol{\nu}}}^2(Y_{\boldsymbol{\nu}}))$ with $\int_{\delta} \omega=1$. We define $\chi\colon Q(E^{(1)}_6)\rightarrow \mathbb{C}$ by $\chi(\mcD)=\int_{\mcD}\omega$. By Lemma 21 in \cite{SaRa}, we have
\begin{equation}\label{int}
\begin{array}{lll}
	\chi(\alpha_1)=\nu_{1,1}-\nu_{1,2},  &\chi(\alpha_2)=\nu_{1,0}-\nu_{1,1},  &\chi(\alpha_3)=\nu_{0,0}+\nu_{1,0}+\nu_{\infty,0}-1\\
	\chi(\alpha_4)=-\nu_{0,1}+\nu_{0,2},  &\chi(\alpha_5)=-\nu_{0,1}+\nu_{0,2},  &\chi(\alpha_6)=-\nu_{\infty,1}+\nu_{\infty,2}, \\
	\chi(\alpha_0)= -\nu_{\infty,1}+\nu_{\infty,2}.&&
\end{array}	
\end{equation}
In particular, $(\nu_{i,j})^{i=0, 1, \infty}_{j=0, 1, 2}$ is uniquely determined by the values $\chi(\alpha_0), \ldots, \chi(\alpha_9)$ and the condition
\begin{equation}\label{sumcond}
\nu_{i,0}+\nu_{i,1}+\nu_{i,2}=2\delta_{i,\infty}
\end{equation}
for $i=0, 1, \infty$. We define an action of $\widetilde{W}(E^{(1)}_6)$ on $\mcN^0$ by using this result, that is, we define $(w(\nu_{i,j}))^{i=0, 1, \infty}_{j=0, 1, 2}$ as the solution of systems of linear equations obtained by replacing $\alpha_i$ and $\nu_{j,k}$ with $w(\alpha_i)$ and $w(\nu_{j,k})$ in the conditions (\ref{int}) and (\ref{sumcond}). 

We consider the case $w=w_1$ as an example. Focusing on the first equation of (\ref{int}), we have
\[
w_1(\nu_{1,1})-w_1(\nu_{1,2})=\chi(w_1(\alpha_1))=\chi(-\alpha_1)=-\nu_{1,1}+\nu_{1,2}.
\]
The following equation is obtained by repeating the same calculation. 
\[\left\{
\begin{array}{ll}
	w_1(\nu_{1,1})-w_1(\nu_{1,2})=-\nu_{1,1}+\nu_{1,2}& \\
	w_1(\nu_{1,0})-w_1(\nu_{1,1})=\nu_{1,0}-\nu_{1,2}& \\
	w_1(\nu_{i,j})-w_1(\nu_{i,j+1})=\nu_{i,j}-\nu_{i,j+1}& i=0, \infty, \ j=0, 1\\
	w_1(\nu_{i,0})+w_1(\nu_{i,1})+w_1(\nu_{i,2})=2\delta_{i,\infty} &i=0, 1, \infty
\end{array}\right.
\]
The solution is as follows;
\begin{equation}\label{w1n}
w_1(\nu_{i,j})=
\left\{
\begin{array}{cc}
\nu_{i,(12)(j)} & i=1\\
\nu_{i,j} & i\neq 1.
\end{array}
\right.
\end{equation}

We give the explicit action of the generators of $\widetilde{W}(E^{(1)}_6)$ in the appendix.

%%%%%%%%%%%%%%%%%%%%%%%%%%%%%%%%%%%%%%%%%%%%
\subsection{Action of $\widetilde{W}(E^{(1)}_6)$ on $\mcS$}
%%%%%%%%%%%%%%%%%%%%%%%%%%%%%%%%%%%%%%%%%%%%
The action of $\widetilde{W}(E^{(1)}_6)$ on $\mcS$ is defined by the automorphisms which realize the action on the Picard group.

Let us fix $w \in \widetilde{W}(E^{(1)}_6)$. Let $\mcE'_0$ be a line on $\mbP^2$ and $\mcE'_i$ be the exceptional divisor of $S_{w(\boldsymbol{\nu})}$ at $p_i(w(\boldsymbol{\nu}))$ for $i=1, \ldots, 9$. We then have $\Pic (S_{w(\boldsymbol{\nu})})=\bigoplus_{i=0}^9\mbZ \mcE'_i$.
\begin{proposition}\label{actsurf}
There is a unique isomorphism $\phi_w \colon S_{\boldsymbol{\nu}}\rightarrow S_{w(\boldsymbol{\nu})}$ such that
\begin{equation}\label{actS}
\phi_w^*(\mcE'_i)=w(\mcE_i)
\end{equation}
for any $i =0, \cdots, 9$.
\end{proposition}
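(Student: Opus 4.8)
The plan is to realize $\phi_w$ as a composition of a linear automorphism of $\mbP^2$ (when $w$ lies in $\Aut(E^{(1)}_6)$) together with standard Cremona/quadratic transformations and permutations of blow-up centres (when $w$ involves the reflections $w_i$), and then to check that the induced action on $\Pic$ matches $w$ via (\ref{actS}). More precisely, since $\widetilde{W}(E^{(1)}_6)$ is generated by the $w_i$ and the $\sigma_j$, it suffices by functoriality of $\phi$ (composition of the isomorphisms $\phi_w\circ\phi_{w'}=\phi_{ww'}$, which follows once (\ref{actS}) is checked, because $\Pic$ determines the isomorphism uniquely) to treat each generator separately. For the diagram automorphisms $\sigma_j$ one checks directly that the prescribed permutation of the lines $\mcD_0,\mcD_1,\mcD_2$ and of the nine points is effected by a linear change of coordinates on $\mbP^2$ together with a relabelling of the blow-up centres; since the point configurations $p_i(\boldsymbol{\nu})$ and $p_i(\sigma_j(\boldsymbol{\nu}))$ were defined precisely so that the period conditions (\ref{int}) transform correctly, this linear map sends $p_i(\boldsymbol{\nu})$ to $p_{\sigma_j(i)}(\sigma_j(\boldsymbol{\nu}))$ and hence descends to the blow-ups. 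For each reflection $w_i$ one uses the geometric meaning of the corresponding root: a root of the form $\mcE_k-\mcE_l$ is realized by interchanging the two blow-up centres $p_k$ and $p_l$ (which by (\ref{int}) corresponds exactly to swapping the two local exponents, i.e. to $w_i(\boldsymbol{\nu})$), while the central root $\alpha_3=\mcE_0-\mcE_1-\mcE_4-\mcE_6$ is realized by the quadratic Cremona transformation of $\mbP^2$ based at the three points $p_1(\boldsymbol{\nu}), p_4(\boldsymbol{\nu}), p_6(\boldsymbol{\nu})$, one on each of $\mcD_1, \mcD_0, \mcD_2$. One must verify that this Cremona transformation carries the remaining six points to $p_i(w_3(\boldsymbol{\nu}))$ and the three lines $\mcD_0,\mcD_1,\mcD_2$ to themselves; this is where the normalization (\ref{ptcond}) and the explicit form of $w_3$ on $\mcN^0$ (recorded in the appendix) are used.

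For existence, once the birational map $\mbP^2\dashrightarrow\mbP^2$ is written down for each generator, I would resolve it by blowing up the relevant base points; the resulting surface is canonically identified with $S_{\boldsymbol{\nu}}$ (resp. with a surface dominating both $S_{\boldsymbol{\nu}}$ and $S_{w(\boldsymbol{\nu})}$), and by counting exceptional curves and using that $S_{\boldsymbol{\nu}}$ has a unique effective anti-canonical divisor (the defining property of an $A^{(1)*}_2$-surface) one sees that the map actually descends to an isomorphism $S_{\boldsymbol{\nu}}\to S_{w(\boldsymbol{\nu})}$ rather than merely a birational correspondence: the $(-1)$-curves that would have to be contracted are precisely the exceptional divisors over points not lying on $Y_{\boldsymbol{\nu}}$, and the transformation is arranged to avoid contracting any curve meeting $Y_{\boldsymbol{\nu}}$. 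The computation (\ref{actS}) on the level of Picard groups is then a direct linear-algebra check on the generators $\mcE_0,\ldots,\mcE_9$: for a swap $p_k\leftrightarrow p_l$ one has $\phi_w^*\mcE'_k=\mcE_l$, $\phi_w^*\mcE'_l=\mcE_k$, and $\phi_w^*\mcE'_i=\mcE_i$ otherwise, which is exactly the reflection $w_i$; for the Cremona transformation based at $p_1,p_4,p_6$ one has the classical formulas $\phi_w^*\mcE'_0 = 2\mcE_0-\mcE_1-\mcE_4-\mcE_6$, $\phi_w^*\mcE'_1=\mcE_0-\mcE_4-\mcE_6$, and similarly for the indices $4,6$, while $\mcE'_i\mapsto\mcE_i$ for $i\in\{2,3,5,7,8,9\}$, and one checks this is the reflection in $\alpha_3$.

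For uniqueness, suppose $\phi$ and $\psi$ are two isomorphisms $S_{\boldsymbol{\nu}}\to S_{w(\boldsymbol{\nu})}$ satisfying (\ref{actS}). Then $\psi^{-1}\circ\phi$ is an automorphism of $S_{\boldsymbol{\nu}}$ acting trivially on $\Pic(S_{\boldsymbol{\nu}})$. Since $S_{\boldsymbol{\nu}}$ is a rational surface with an effective anti-canonical divisor $Y_{\boldsymbol{\nu}}$ whose components generate a finite-index sublattice together with sufficiently many $(-1)$-curves — equivalently, since the classes $\mcE_1,\ldots,\mcE_9$ of the nine exceptional curves are fixed and these curves together with the strict transform of a line determine the morphism to $\mbP^2$ up to the (trivial, by the normalization (\ref{ptcond})) automorphisms of $\mbP^2$ fixing the three lines and the nine points — the automorphism $\psi^{-1}\circ\phi$ must be the identity. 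The main obstacle I anticipate is the verification that the quadratic transformation realizing $w_3$ genuinely descends to an isomorphism of the $A^{(1)*}_2$-surfaces and sends the six remaining points to the correctly-normalized points $p_i(w_3(\boldsymbol{\nu}))$; this requires combining the explicit coordinates of the $p_i(\boldsymbol{\nu})$, the normalization (\ref{ptcond}), and the explicit action of $w_3$ on $\mcN^0$, and is the computational heart of the argument.
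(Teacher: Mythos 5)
Your proposal follows essentially the same route as the paper: existence is established generator by generator (the identity map for the reflections $w_i$ with $i\neq 3$, since swapping exponents leaves the point set unchanged; linear automorphisms of $\mbP^2$ for $\sigma_1,\sigma_2$; and the quadratic Cremona transformation based at $p_1(\boldsymbol{\nu}),p_4(\boldsymbol{\nu}),p_6(\boldsymbol{\nu})$ for $w_3$), followed by a check of (\ref{actS}) on the Picard lattice. The only divergence is that the paper disposes of uniqueness by citing Theorem 25 of \cite{SaRa} rather than arguing directly that an automorphism acting trivially on $\Pic(S_{\boldsymbol{\nu}})$ is the identity, which is what that theorem provides.
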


\begin{proof}
The uniqueness is followed by Theorem 25 in \cite{SaRa}. We prove the existence for each generator of $\widetilde{W}(E^{(1)}_6)$.

First, we consider the case $w=w_i$ for $i \neq 3$. The action of $w_1$ on $\Pic(S_{\boldsymbol{\nu}})$ is given by
\[
w_1(\mcE_i)=\mcE_{(23)(i)}.
\]
Since the sets of points $\{p_i(\boldsymbol{\nu})\}_{i=1}^9$ and $\{p_i(w_1(\boldsymbol{\nu}))\}_{i=1}^9$ are the same by (\ref{w1n}), the surfaces $S_{\boldsymbol{\nu}}$ and $S_{w_1(\boldsymbol{\nu})}$ are also the same. We can check that 
$\phi_{w_1}:=\id \colon S_{\boldsymbol{\nu}}\rightarrow S_{w_1(\boldsymbol{\nu})}$ satisfies the condition (\ref{actS}). We can also see that $\phi_{w_i}:=\id \colon S_{\boldsymbol{\nu}}\rightarrow S_{w_i(\boldsymbol{\nu})}$ is the desired isomorphism for other cases.

Next, we consider the case $w=\sigma_i$. The action of $\sigma_1$ on $\Pic(S_{\boldsymbol{\nu}})$ is given by
\[
\sigma_{1}(\mcE_i)=\mcE_{(14)(25)(39)(i)},
\]
and we have $\sigma_1(\mcD_j)=\mcD_{(01)(j)}$. It means that $\phi_{\sigma_1}$ should exchange lines $\mcD_0, \mcD_1$ and moves the exceptional curve $\mcE_i$ to $\mcE_{(14)(25)(39)(i)}$. 
Let $\phi_{\sigma_1}\colon S_{\boldsymbol{\nu}} \rightarrow S_{\sigma_1(\boldsymbol{\nu})}$ be the isomorphisms induced by the isomorphism
\[
\mathbb{P}^2 \rightarrow \mathbb{P}^2; \  (x_0:x_1:x_2)\mapsto (x_2-x_0: -x_1: x_2).
\]
We can see that $\phi_{\sigma_1}$ satisfies the condition (\ref{actS}).

In the case $w=\sigma_2$, we can find that the isomorphism $\phi_{\sigma_2}\colon S_{\boldsymbol{\nu}} \rightarrow S_{\sigma_2(\boldsymbol{\nu})}$ induced by 
\[
\mathbb{P}^2 \rightarrow \mathbb{P}^2; \  (x_0:x_1:x_2)\mapsto (x_2: -\tfrac{2}{3}x_0+x_1+\tfrac{2}{3}x_2: x_0)
\]
satisfies the condition (\ref{actS}).

Finally, we consider the case $w=w_3$.  The action of $w_3$ on $\Pic(S_{\boldsymbol{\nu}})$ is given by
\[
w_3(\mcE_j)=
\left\{
\begin{array}{cc}
	\mcE_0-\mcE_4-\mcE_6 & j=1\\
	\mcE_0-\mcE_1-\mcE_6 & j=4\\
	\mcE_0-\mcE_1-\mcE_4 & j=6\\
	\mcE_j & j\neq 1, 4, 6.
\end{array}
\right.
\]
It implies that $\phi_{w_3}$ is the morphism induced by the quadratic transformation of $\mathbb{P}^2$ based at the points $p_1(\boldsymbol{\nu}), p_4(\boldsymbol{\nu}), p_6(\boldsymbol{\nu})$.
Let  $\phi_{w_3} \colon S_{\boldsymbol{\nu}}\rightarrow S_{w_3(\boldsymbol{\nu})}$ be the isomorphsm induced by the quadratic transformation
\begin{equation}\label{quadtranf}
\mathbb{P}^2 \dasharrow \mathbb{P}^2, \  (x_0:x_1:x_2)\mapsto (\gamma x_0f_{16}: af_{14}f_{16}-bf_{16}f_{46}-cf_{14}f_{46} :\gamma x_2f_{14}), 
\end{equation}

where
\[
\gamma:=\nu_{0,0}+\nu_{1,0}+\nu_{\infty,0}-1, \ 
a:=\nu_{1,0}-\tfrac{2}{3}\gamma, \ b:=\nu_{\infty,0}-1-\tfrac{2}{3}\gamma, \  c:=\nu_{0,0}-\tfrac{2}{3}\gamma,
\]
\begin{align*}
&f_{14}(x_0, x_1, x_2):= \nu_{1,0}x_0+\nu_{0,0}(x_0-x_2)-x_1,\\
&f_{16}(x_0, x_1, x_2):=  \nu_{1,0}x_2+(\nu_{\infty,0}-1)(x_2-x_0)-x_1,\\
&f_{46}(x_0, x_1, x_2):= \nu_{0,0}x_2+(\nu_{\infty,0}-1)x_0+x_1.
\end{align*}
We can check that $\phi_{w_3}$ satisfies the condition (\ref{actS}).
\end{proof}
Isomorphisms in Proposition \ref{actsurf} are extended to automorphisms of $\mcS$. Those automorphisms define an action of $\widetilde{W}(E^{(1)}_6)$ on $\mcS$. The action is equivariant for the map $\mcS\rightarrow \mcN^0$ by the construction. In the next section, we will see realizations of the action over $\mcS\rightarrow \mcN^0$ in terms of parabolic connections.

%%%%%%%%%%%%%%%%%%%%%%%%%%%%%%%%%%%%%%%%%%%%
%%%%%%%%%%%%%%%%%%%%%%%%%%%%%%%%%%%%%%%%%%%%
\section{Symmetry of moduli spaces of parabolic connections}
%%%%%%%%%%%%%%%%%%%%%%%%%%%%%%%%%%%%%%%%%%%%
%%%%%%%%%%%%%%%%%%%%%%%%%%%%%%%%%%%%%%%%%%%%

Transformations of parabolic connections induce automorphisms of $\mcM$ by the universal property of the moduli spaces.
Our goal is to find transformations for any $w\in \widetilde{W}(E^{(1)}_6)$ such that the induced morphism $f_w\colon \mcM\rightarrow \mcM$ makes the following diagrams commutative
\begin{equation}\label{diag}
\begin{tikzcd}%[ampersand replacement=\&]
	\mcM\arrow[r, "f_w"]\arrow[d]&\mcM\arrow[d]\\
	\mcN^0\arrow[r, "w"]&\mcN^0,
\end{tikzcd}  \quad 
\begin{tikzcd}%[ampersand replacement=\&]
	\mcM\arrow[r, "f_w"]\arrow[d, "\varphi"]&\mcM\arrow[d, "\varphi"]\\
	\mcS\arrow[r, "\phi_w"]&\mcS.
\end{tikzcd}
\end{equation}
This is accomplished by thinking about the generators of $\widetilde{W}(E^{(1)}_6)$. We will see such a realization in three cases;
\[
\text{(i) $w_i\ (i\neq 3)$}, \quad \quad    \text{(ii)}\  \sigma_1, \sigma_2, \quad \quad \text{(iii)} \ w_3.
\]

%%%%%%%%%%%%%%%%%%%%%%%%%%%%%%%%%%%%%%%%%%%%
\subsection{Action of $w_i$ on $\mcM$ when $i\neq 3$}
%%%%%%%%%%%%%%%%%%%%%%%%%%%%%%%%%%%%%%%%%%%%
The action of $w_i$ on $\mcN^0$ is to exchange adjacent local exponents when $i\neq 3$. The action of $w_i$ on $\mcM$ is hence realized by changing not the underlying logarithmic connection but the parabolic structures.
We will see it in the cases of $w_1$ and $w_2$. We can realize the action of another $w_i$ in the same manner. $w_1$ acts on $\mcN^0$ by
\[
\left\{\begin{array}{ccc}
	\nu_{0,0}&\nu_{1,0}&\nu_{\infty,0}\\
	\nu_{0,1}&\textcolor{blue}{\nu_{1,1}}&\nu_{\infty,1}\\
	\nu_{0,2}&\textcolor{red}{\nu_{1,2}}&\nu_{\infty,2}
\end{array}\right\}\overset{w_1}{\longmapsto}
\left\{\begin{array}{ccc}
	\nu_{0,0}&\nu_{1,0}&\nu_{\infty,0}\\
	\nu_{0,1}&\textcolor{red}{\nu_{1,2}}&\nu_{\infty,1}\\
	\nu_{0,2}&\textcolor{blue}{\nu_{1,1}}&\nu_{\infty,2}
\end{array}\right\}.
\]
So we only have to replace $l_{1,2}$ with the eigenspace of $\res_{1}\nabla$ for $\nu_{1,1}$.
That is, for a $\boldsymbol{\nu}$-parabolic logarithmic connection $(E,\nabla, l_*)$, we define a $w_1(\boldsymbol{\nu})$-parabolic logarithmic connection $(E',\nabla', l'_*)$ by $E':=E$, $\nabla':=\nabla$, $l'_{i,j}:=l_{i,j}$ for $(i,j)\neq(1,2)$, and 
\[
l'_{1,2}:=\Ker(\res_1\nabla-\nu_{1,1}\id\colon E|_{1}\longrightarrow E|_{1})
\]
The transformation $(E,\nabla, l_*) \mapsto (E',\nabla', l'_*)$ induces an automorphism $f_{w_1}\colon \mcM\rightarrow \mcM
$, and $f_{w_1}$ commutes the diagrams (\ref{diag}). Next we consider $w_2$, which acts on $\mcN^0$ by
\[
\left\{\begin{array}{ccc}
	\nu_{0,0}&\textcolor{blue}{\nu_{1,0}}&\nu_{\infty,0}\\
	\nu_{0,1}&\textcolor{red}{\nu_{1,1}}&\nu_{\infty,1}\\
	\nu_{0,2}&\nu_{1,2}&\nu_{\infty,2}
\end{array}\right\}\overset{w_1}{\longmapsto}
\left\{\begin{array}{ccc}
	\nu_{0,0}&\textcolor{red}{\nu_{1,1}}&\nu_{\infty,0}\\
	\nu_{0,1}&\textcolor{blue}{\nu_{1,0}}&\nu_{\infty,1}\\
	\nu_{0,2}&\nu_{1,2}&\nu_{\infty,2}
\end{array}\right\}.
\]
We have to replace $l_{1,1}$ with the sum of the eigenspaces of $\res_{1}\nabla$ for $\nu_{1,0}$ and $\nu_{1,2}$.
For a $\boldsymbol{\nu}$-parabolic logarithmic connection $(E,\nabla, l_*)$, we define a $w_2(\boldsymbol{\nu})$-parabolic logarithmic connection $(E'',\nabla'', l''_*)$ by $E'':=E$, $\nabla'':=\nabla$, $l''_{i,j}:=l_{i,j}$ for $(i,j)\neq(1,1)$, and 
\[
l''_{1,1}:=\Ker(E|_{1}\longrightarrow E|_{1}/l_{1,2}\overset{\res_1\nabla-\nu_{1,0}\id}{\longrightarrow} E|_{1}/l_{1,2}).
\]
This transformation $(E,\nabla, l_*) \mapsto (E'',\nabla'', l''_*)$ is a desired one.

%%%%%%%%%%%%%%%%%%%%%%%%%%%%%%%%%%%%%%%%%%%%
\subsection{Action of $\sigma_{i}$ on $\mcM$}
%%%%%%%%%%%%%%%%%%%%%%%%%%%%%%%%%%%%%%%%%%%%
First, we consider $\sigma_1$.  The action of $\sigma_1$ on $\mcN^0$ is as follows;
\[
\left\{\begin{array}{ccc}
	\textcolor{blue}{\nu_{0,0}}&\textcolor{red}{\nu_{1,0}}&\nu_{\infty,0}\\
	\textcolor{blue}{\nu_{0,1}}&\textcolor{red}{\nu_{1,1}}&\nu_{\infty,1}\\
	\textcolor{blue}{\nu_{0,2}}&\textcolor{red}{\nu_{1,2}}&\nu_{\infty,2}
\end{array}\right\}\overset{\sigma_1}{\longmapsto}
\left\{\begin{array}{ccc}
	\textcolor{red}{\nu_{1,0}}&\textcolor{blue}{\nu_{0,0}}&\nu_{\infty,0}\\
	\textcolor{red}{\nu_{1,1}}&\textcolor{blue}{\nu_{0,1}}&\nu_{\infty,1}\\
	\textcolor{red}{\nu_{1,2}}&\textcolor{blue}{\nu_{0,2}}&\nu_{\infty,2}
\end{array}\right\}.\\
\]
The action of $\sigma_1$ on $\mcM$ is hence realized by pulling back connections by the morphism $s_1\colon \pl \rightarrow \pl; z\mapsto 1-z$. 
In fact, we have the following proposition.
\begin{proposition}
Given a $\boldsymbol{\nu}$-logarithmic connection $(E, \nabla, l_*)$, set $(E', \nabla'):=s_1^*(E, \nabla)$. Then there is a unique parabolic structure $l'_{i,*}$ of $E'|_{i}$ for each $i$ such that the collection $(E', \nabla', l'_*:=\{l'_{i,*}\}_{i=0, 1, \infty})$ is a $\sigma_{1}(\boldsymbol{\nu})$-parabolic connection. Let us define a morphism $f_{\sigma_1}\colon \mcM\rightarrow \mcM$ by $f_{\sigma_1}((E, \nabla, l_*)):=(E', \nabla', l'_*)$. Then $f_{\sigma_1}$ commutes the diagrams (\ref{diag}). 
\end{proposition}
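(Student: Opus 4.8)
The plan is to verify the proposition by a direct computation using the explicit normal form (\ref{normformeq}). First I would observe that pulling back a logarithmic connection by the automorphism $s_1\colon \pl \to \pl$, $z\mapsto 1-z$ preserves the support $D=[0]+[1]+[\infty]$: it swaps the points $0$ and $1$ and fixes $\infty$. Consequently, for a residue $\res_{t_i}(\nabla)$ with eigenvalues $\nu_{i,0},\nu_{i,1},\nu_{i,2}$, the pulled-back connection $(E',\nabla')$ has $\res_{0}(\nabla')$ conjugate to $\res_{1}(\nabla)$, $\res_{1}(\nabla')$ conjugate to $\res_{0}(\nabla)$, and $\res_{\infty}(\nabla')$ conjugate to $\res_\infty(\nabla)$ up to a sign coming from $d(1-z)=-dz$ — but in fact the residue at $\infty$ is unchanged because $s_1$ fixes $\infty$ and the local coordinate $w=1/z$ transforms with the correct Jacobian. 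Hence the multiset of local exponents of $(E',\nabla')$ at $0,1,\infty$ is exactly $\{\nu_{1,j}\},\{\nu_{0,j}\},\{\nu_{\infty,j}\}$, which matches $\sigma_1(\boldsymbol\nu)$. Since the three eigenvalues at each point are distinct, the eigenspace decomposition of each residue is determined, so there is a unique parabolic structure $l'_{i,*}$ making $(E',\nabla',l'_*)$ a $\sigma_1(\boldsymbol\nu)$-parabolic connection (cf. Remark \ref{remark}(1)); one also checks the Fuchs relation (\ref{Fuchs}) is preserved since $\deg E'=\deg E=-2$. One should also note $s_1^*$ preserves the underlying bundle type $\Opl\oplus\Opl(-1)\oplus\Opl(-1)$ since $s_1$ is an automorphism of $\pl$, and that $\boldsymbol\nu\in\mcN^0$ implies $\sigma_1(\boldsymbol\nu)\in\mcN^0$, so $f_{\sigma_1}$ is well-defined on $\mcM$; it is a morphism by the universal property of the fine moduli space applied to the family obtained by pulling back the universal family along $\id_{\pl}\times\,$(something) — more precisely, $s_1^*$ applied fiberwise to the universal connection on $\mcM$ yields a family of $\sigma_1(\boldsymbol\nu)$-parabolic connections over $\mcM$, inducing $f_{\sigma_1}\colon\mcM\to\mcM$, and the first diagram in (\ref{diag}) commutes by the exponent computation above.

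For the second diagram, the plan is to track the invariants $(q,p)$ of Section \ref{normform} through $s_1^*$. Starting from the normal form $\nabla=d+A(z)\frac{dz}{z(z-1)}$ on $U_0$ with the displayed matrix, substituting $z\mapsto 1-z$ gives $\nabla'=d+A(1-z)\frac{d(1-z)}{(1-z)(-z)}=d+A(1-z)\frac{dz}{z(z-1)}$, so the pulled-back matrix is $A(1-z)$ with the same scalar one-form. From the structure of $A(z)$ one reads off that the entry governing the apparent singularity, $z-q$ in position $(3,2)$, becomes $(1-z)-q = -(z-(1-q))$, i.e. the new apparent singularity is $1-q$, while the $(2,2)$-entry $-p$ becomes $-p$ unchanged — but one must be careful: $A(1-z)$ is not yet in the normal form (\ref{normformeq}), so a gauge transformation by a constant matrix is needed to bring it there, and I would compute that gauge explicitly (it is the permutation/rescaling that restores the shape of the first column and the $(3,3)$-entry $p$). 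After this normalization I expect to find the new dual parameter is $\pm p$ (the sign to be pinned down by the characterization of $p$ as the coefficient of $\frac{dz}{z(z-1)}$ in $h$, from Section \ref{normform}). This yields $f_{\sigma_1}\colon (q:p:1)\mapsto (1-q:\pm p:1)$ up to the normalization, and I would then verify that $\varphi$ intertwines this with $\phi_{\sigma_1}$, which is induced by $(x_0:x_1:x_2)\mapsto(x_2-x_0:-x_1:x_2)$ — note that on the affine chart this is $(x_1/x_0)\mapsto -x_1/(x_2-x_0)$ and $(x_2/x_0)\mapsto x_2/(x_2-x_0)$, matching $q\mapsto 1-q$ after the coordinate identification $q=x_0/x_2$ (or its reciprocal) and $p$ up to sign.

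The main obstacle, I expect, will be the bookkeeping in the second step: the pulled-back matrix $A(1-z)$ must be conjugated by an explicit constant gauge to recover the rigid normal form (\ref{normformeq}), and keeping track of how this gauge acts on the pair $(q,p)$ — in particular getting the correct sign of $p$ and checking consistency with the second chart formula $(1:p':q')$ via $q'=q^{-1}, p'=pq^{-1}$ — requires care. A cleaner route, which I would prefer if it works, is to avoid the normal form altogether: use the intrinsic characterizations of $q=\App(E,\nabla,l_*)$ as the zero of the homomorphism $u$ in (\ref{apphom}) and of $p$ as the coefficient in $h$ from diagram (\ref{h1diag}). Since $s_1^*$ is pullback along an automorphism of $\pl$, it commutes with the formation of the filtration $E_0\supsetneq E_1\supsetneq E_2$ of Proposition \ref{appfil} (the defining conditions (\ref{filcon1}) are pullback-stable), hence $\App(s_1^*(E,\nabla,l_*)) = s_1^{-1}(\App(E,\nabla,l_*)) = 1-q$, and a parallel naturality argument for $h$ gives the behavior of $p$. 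Matching this against the $\mbP^2$-coordinates induced by $\phi$ and the explicit formula for $\phi_{\sigma_1}$ then closes the argument; the case $\sigma_2$ is analogous, using the automorphism of $\pl$ corresponding to $(x_0:x_1:x_2)\mapsto(x_2:\ast:x_0)$, which is $z\mapsto 1/z$, i.e. the swap of $0$ and $\infty$ fixing $1$, together with the $\Opl(\pm1)$-twist bookkeeping implicit in the chart change.
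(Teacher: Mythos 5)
Your proposal follows essentially the same route as the paper: transport of the local exponents plus uniqueness of the parabolic structure from Remark \ref{remark}(1) for the first claim, then pullback of the normal form (\ref{normformeq}) under $z\mapsto 1-z$ followed by a constant gauge to read off the new pair $(1-q,-p)$ and match it with $\phi_{\sigma_1}\colon (x_0:x_1:x_2)\mapsto (x_2-x_0:-x_1:x_2)$, concluding by agreement on a Zariski open subset. One small correction to your computation: $s_1^*\bigl(\tfrac{dz}{z(z-1)}\bigr)=-\tfrac{dz}{z(z-1)}$, so the pulled-back matrix is $-A(1-z)$ rather than $A(1-z)$; with this sign the constant (diagonal) gauge restoring the normal form yields the dual parameter $-p$, which is exactly the sign needed to match $x_1\mapsto -x_1$.
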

\begin{proof}
We can easily check that eigenvalues of $\res_{i}\nabla'$ are $\sigma_{1}(\boldsymbol{\nu})_{i,1}, \sigma_{1}(\boldsymbol{\nu})_{i,2}, \sigma_{1}(\boldsymbol{\nu})_{i,3}$. The existence and uniqueness of parabolic structures follow by Remark \ref{remark}. The left diagram in (\ref{diag}) clearly commutes. We consider the right diagram. When a connection map $\nabla$ is of the form (\ref{normformeq}), we obtain
\begin{align*}
	s_1^*\nabla&=d+
	\begin{pmatrix}
		0& -a_{12}(1-z)& -a_{13}(1-z)\\
		-1& p& 0\\
		0& z-(1-q)& -p
	\end{pmatrix}
	\frac{dz}{z(z-1)}\\
	&\cong d+
	\begin{pmatrix}
		0& a_{12}(1-z)& a_{13}(1-z)\\
		1& p& 0\\
		0& z-(1-q)& -p
	\end{pmatrix}
	\frac{dz}{z(z-1)}.
\end{align*}
We hence have 
\[
(\varphi\circ f_{\sigma_1})((E, \nabla, l_*))=(1-q: -p: 1)=\phi_{\sigma_1}((q: p: 1))=(\phi_{\sigma_1}\circ \varphi)((E, \nabla, l_*)).
\]
Since $\varphi\circ f_{\sigma_1}=\phi_{\sigma_1}\circ \varphi$ on a Zariski open subset of $M(\boldsymbol{\nu})$, two morphism $\varphi\circ f_{\sigma_1}$ and $\phi_{\sigma_1}\circ \varphi$ coincide on $M(\boldsymbol{\nu})$.
\end{proof}
Similarly, we can see that the action of $\sigma_2$ is realized by
\[
s_2^*(E, \nabla)\otimes (\mcO_{\pl}, d-\tfrac{2}{3}\tfrac{dz}{z}), 
\]
where $s_2\colon \pl \rightarrow \pl; z\mapsto \tfrac{1}{z}$.
%%%%%%%%%%%%%%%%%%%%%%%%%%%%%%%%%%%%%%%%%%%%
\subsection{Action of $w_3$ on $\mcM$}
%%%%%%%%%%%%%%%%%%%%%%%%%%%%%%%%%%%%%%%%%%%%

We finally consider the realization of $w_3$. We use middle convolution for logarithmic connections for realization. Here we mention the main theorem. The proof is in Section \ref{MTsec}. We will explain notations in the next section.

Let $\beta \in H^0(X, \Omega_X^1(\log J))$ with
\begin{equation}\label{betacond}
\beta^{H_i}=-\nu_{i,0}+\delta_{\infty, 0}, \ \beta^{V_i}=\nu_{i,0}-\delta_{\infty, 0}-\frac{2}{3}\beta^T, \  \beta^T=\nu_{0,0}+\nu_{1,0}+\nu_{\infty,0}-1=\gamma.
\end{equation}
We define operators $\otimes_\pm$ by tensoring the connection $(\Opl(\pm 1), d)$, that is, 
\[
\otimes_{\pm}((E, \nabla)):= (E, \nabla) \otimes (\Opl(\pm 1), d).
\]
Set
\[
\mcN^{00}:=\left\{(\nu_{i,j})\in \mcN^0\mid \nu_{i,j}-\nu_{i,k}\notin \mbZ\setminus\{0\}\ \text{for any $i$ and $j\neq k$}\right\}.
\]
The condition that $\nu_{i,j}-\nu_{i,k}\notin \mbZ\setminus\{0\}$ comes from Proposition \ref{variation}.
\begin{theorem}\label{MT}
For $\boldsymbol{\nu} \in \mcN^{00}$, the following is commutative;
\[
\begin{tikzcd}%[ampersand replacement=\&]
	M(\boldsymbol{\nu})\arrow[r, "\otimes_{-}\circ \mc_\beta \circ \otimes_+"]\arrow[d, "\varphi_{\boldsymbol{\nu}}"]&[40pt]M(w_3(\boldsymbol{\nu}))\arrow[d, "\varphi_{w_3(\boldsymbol{\nu})}"]\\
	S_{\boldsymbol{\nu}}\arrow[r, "\phi_{w_3}"]&S_{w_3(\boldsymbol{\nu})}.
\end{tikzcd}
\]
In particular, the morphism $f_{w_3}:=\otimes_{-}\circ \mc_\beta \circ \otimes_+$ commutes the diagrams in (\ref{diag}) when $\mcN^0$ is restricted to $\mcN^{00}$.
\end{theorem}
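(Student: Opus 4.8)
The plan is to verify the commutativity by tracing an arbitrary point through both compositions in explicit coordinates, using the normal form (\ref{normformeq}) on $U_0$ and the characterization of $(q,p)$ recalled at the end of $\S$\ref{normform}: $q$ is the apparent singularity (the zero of $u$ in (\ref{apphom})) and $p$ is the coefficient of $\frac{dz}{z(z-1)}$ appearing in $h$. Since $\varphi_{\boldsymbol\nu}$ and $\varphi_{w_3(\boldsymbol\nu)}$ are isomorphisms and all maps in sight are morphisms, it suffices to check the identity on a Zariski-dense open subset of $M(\boldsymbol\nu)$ — namely the locus where the connection admits the normal form with $q \neq 0, 1, \infty$ and where the middle convolution $\mc_\beta$ and the tensor operations $\otimes_\pm$ are all well-behaved (this is where the restriction to $\mcN^{00}$ enters, via Proposition \ref{variation}). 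So I would fix $(q:p:1) \in S_{\boldsymbol\nu}\setminus Y_{\boldsymbol\nu}$ and the connection $\nabla = d + A\,\frac{dz}{z(z-1)}$ of (\ref{normformeq}), and compute.

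\textbf{Key steps, in order.} First, apply $\otimes_+$: tensoring by $(\Opl(1),d)$ changes the underlying bundle to $\Opl(1)\oplus\Opl\oplus\Opl$ and shifts the residues, but does not move the apparent singularity or the dual parameter (it is an untwisting). Second, apply the middle convolution $\mc_\beta$ with $\beta$ satisfying (\ref{betacond}): using Simpson's description recalled in Section \ref{mcsec}, write down the new connection on the convolved bundle, identify its underlying bundle (which must again be $\Opl\oplus\Opl(-1)\oplus\Opl(-1)$ after the subsequent $\otimes_-$), and read off the residue data to confirm the local exponents are $w_3(\boldsymbol\nu)$. Third, from the explicit form of $\mc_\beta(\nabla)$ compute the new apparent singularity $\tilde q$ and new dual parameter $\tilde p$ by locating the filtration of Proposition \ref{appfil} and the homomorphisms $u$, $h$ for the transformed connection. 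Fourth, apply $\otimes_-$ (again an untwisting, not affecting $(\tilde q,\tilde p)$), landing at the point $(\tilde q:\tilde p:1)\in S_{w_3(\boldsymbol\nu)}$. Finally, compare $(\tilde q:\tilde p:1)$ with $\phi_{w_3}((q:p:1))$ computed directly from the quadratic transformation (\ref{quadtranf}) with the parameters $\gamma, a, b, c$ and the linear forms $f_{14}, f_{16}, f_{46}$; checking these agree is a finite rational identity in $q$, $p$, and the $\nu_{i,0}$. Once equality holds on the dense open set, density and the morphism property force it everywhere, proving the diagram commutes; the last assertion about $f_{w_3}$ and the diagrams (\ref{diag}) is then immediate since $\varphi$ intertwines the two squares and the action on $\mcN^{00}$ is $w_3$ by construction (the local exponents were checked in step two).

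\textbf{Main obstacle.} The hard part will be step three — extracting $(\tilde q, \tilde p)$ from the middle-convolved connection. Middle convolution is defined via a cohomological (or integral) construction on the level of the whole bundle, so the transformed connection is not handed to us in the triangular normal form (\ref{normformeq}); one must first produce a trivialization in which the filtration $E_0\supsetneq E_1 \supsetneq E_2$ of Proposition \ref{appfil} is visible, then track how the homomorphism $u$ of (\ref{apphom}) and the matrix entry $h$ of (\ref{h1diag}) transform. Because $\mc_\beta$ involves the one-form $\beta$ with the specific cohomological shifts in (\ref{betacond}), the computation of the new apparent divisor amounts to a careful analysis of the kernel/cokernel sheaves entering Simpson's construction, and this is precisely the content that makes the $w_3$-case "the most difficult realization." The expectation is that after choosing the right trivialization the resulting formulas for $\tilde q$ and $\tilde p$ match (\ref{quadtranf}) essentially on the nose, with the denominators $f_{14}, f_{16}$ appearing as the factors by which the apparent singularity is multiplied under the quadratic transformation; the remaining comparison is then routine algebra.
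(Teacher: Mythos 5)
Your proposal follows essentially the same route as the paper's proof: fix the normal form (\ref{normformeq}), push the connection through $\otimes_+$, $\mc_\beta$, $\otimes_-$, use Proposition \ref{variation} to confirm the local exponents become $w_3(\boldsymbol{\nu})$, extract the new apparent singularity and dual parameter by producing an explicit trivialization of the convolved bundle (the paper does this by computing generators of $\pi_{2*}(F'\otimes \Omega^1_{(X,J)/(Z,D)})$ and the Gauss--Manin matrix via \v{C}ech cohomology), compare with the quadratic transformation (\ref{quadtranf}), and conclude by Zariski density. You also correctly single out the genuinely hard step — making the cohomologically defined $\mc_\beta$ explicit enough to locate the filtration of Proposition \ref{appfil} — which is exactly where the paper spends Steps 1 and 2 of Section \ref{MTsec}.
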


%%%%%%%%%%%%%%%%%%%%%%%%%%%%%%%%%%%%%%%%%%%%
%%%%%%%%%%%%%%%%%%%%%%%%%%%%%%%%%%%%%%%%%%%%
\section{Simpson's middle convolution}\label{mcsec}
%%%%%%%%%%%%%%%%%%%%%%%%%%%%%%%%%%%%%%%%%%%%
%%%%%%%%%%%%%%%%%%%%%%%%%%%%%%%%%%%%%%%%%%%%
We review the theory of middle convolution of logarithmic connections over $(\pl, D=[t_1]+\cdots+[t_n])$ defined in \cite{Sim}. 
Put $(Y, D_Y):=(\pl, D)$ and $(Z, D_Z):=(\pl, D)$, and let  $p_1\colon Y\times Z\rightarrow Y$ and $p_2\colon Y\times Z\rightarrow Z$ be the projection. Let $\pi \colon X \rightarrow Y\times Z$ be the blow-up at the $n$ points $(t_1, t_1), \ldots, (t_n, t_n)$, and set $\pi_1=p_1 \circ \pi$ and $\pi_2=p_2 \circ \pi$. We define a normal crossing divisor $J$ on $X$ as
\[
J:=\sum_{i=1}^nH_i+\sum_{i=1}^nV_i+\sum_{i=1}^nU_i+T,
\]
where $H_i$ is the strict transformation of $\{t_i\}\times Z$, $V_i$ is the strict transformation of $Y \times \{t_i\}$, $U_i$ is the exceptional divisor corresponding to $(t_i, t_i)$, and $T$ is the strict transformation of the diagonal. Set $\xjzq{1}:=\xj{1}/\pi_2^*\zd$.

We consider the middle convolution $\mc_{\beta}(E, \nabla)$ of logarithmic connections $(E, \nabla)$ over $(\pl, \bm{t}=\{t_i\}_{i=1}^n)$ for $\beta \in H^0(X, \xj{1})$. We define a logarithmic connection $(F, \nabla_F)$ over $(X,J)$ as
\[
(F, \nabla_F)=\pi_1^*(E, \nabla)\otimes (\mcO_X, d+\beta). 
\]
Consider the de Rham complex $\DR_X:=\DR_X(F, \nabla_F)$ and the relative de Rham complex $\rDR:=\DR_{X/T}(F, \nabla_F)$ defined as follows;
\begin{align*}
	\DR_X(F, \nabla_F) :=&[F \overset{\nabla_F}{\longrightarrow}F\otimes \xj{1} \overset{\nabla_F}{\longrightarrow}F\otimes \xj{2}],\\
	\DR_{X/T}(F, \nabla_F) :=&[F \overset{\overline{\nabla}_F}{\longrightarrow}F\otimes \Omega^1_{(X,J)/(Z,D)}],
\end{align*}
where $\overline{\nabla}_F$ is the composition of $\nabla_F$ and the quotient map $F\otimes\xj{1}\rightarrow F\otimes\xjzq{1}$. Two complexes $\DR_X$ and $\rDR$ fit into the short exact sequence 

\begin{equation}\label{drex}
	0\to \rDR[-1]\otimes \pi_2^*\zd\to \DR_X\to \rDR\to 0.
\end{equation}
When $\mathbb{R}^1\pi_{2*}\rDR$ is locally free, the connecting homomorphism of the above short exact sequence defines a connection map
\[
\nabla^{\GM}: \mathbb{R}^1\pi_{2*}\rDR \to \mathbb{R}^2\pi_{2*}(\rDR[-1]\otimes \pi_2^*\zd) \cong \mathbb{R}^1\pi_{2*}\rDR \otimes \zd,
\]
which is called a logarithmic Gauss-Manin connection. We define the middle convolution $\mc_{\beta}(E,\nabla)$ as a subconnection of $(\mathbb{R}^1\pi_{2*}\rDR, \nabla^{\GM})$. Set
\[
\mcF^0:=(F|_{T}/\Image (\res_T\nabla_F)) \oplus \bigoplus_{i=1}^n(F|_{H_i}/\Image (\res_{H_i}\nabla_F)),
\]
where $\res_T\nabla_F\colon F|_{T}\rightarrow F|_{T}$ and $\res_{H_i}\nabla_{F}\colon F|_{H_i}\rightarrow F|_{H_i}$ be the endmorphisms induced by $\nabla_F$. We define a sheaf $F'$ by the following short exact sequence 
\begin{equation}\label{fprime}
	0 \to F' \to F\to \mcF^0\to 0. 
\end{equation}
We then have $\overline{\nabla}_F(F)\subset F'\otimes \xjzq{1}$. We define a complex $\rMDR:=\MDR_{X/T}(F, \nabla_F)$ of sheaves over $\mbC$ by
\begin{equation}
	\MDR_{X/T}(F, \nabla_F) :=[F \overset{\overline{\nabla}_F}{\longrightarrow}F'\otimes \Omega^1_{(X,J)/(Z,D)}].
\end{equation}
There is a short exact sequence
\[
0 \to \rMDR\to \rDR \to \mcF^0[-1] \to 0, 
\]
which induces 
\[0=\pi_{2*} \mcF^0[-1]\to \mbR^1\pi_{2*}\rMDR \to \mbR^1\pi_{2*}\rDR.
\]
By definition, we have 
\[
\nabla^{\GM}(\mbR^1\pi_{2*}\rMDR)\subset \mbR^1\pi_{2*}\rMDR\otimes \xjzq{1}.
\]
Let  $\nabla^{\GM}_{\tmid}$ be the restriction of $\nabla^{\GM}$ on $\mbR^1\pi_{2*}\MDR_{X/T}$. 

\begin{definition}
Assume that $\mbR^1\pi_{2*}\rMDR$ is locally free. We call the pair $(\mbR^1\pi_{2*}\rMDR, \nabla^{\GM}_{\tmid})$ \textit{the middle convolution} of $(E, \nabla)$ for $\beta$, and denote it by $\mc_\beta(E,\nabla)$; 
\[
\mc_\beta(E,\nabla):=(\mbR^1\pi_{2*}\rMDR, \nabla^{\GM}_{\tmid}).
\]
\end{definition}

Next, we will see the variation of local exponents. Let us fix $\boldsymbol{\mu}=(\mu_{i,j})_{0\leq j\leq r-1}^{1\leq i\leq n}\in \mbC^{rn}$. For $\mu \in \mbC$, we define a divisor $[\boldsymbol{\mu}]_i$ of $\mathbb{C}$ by $[\boldsymbol{\mu}]_i:=\sum_{\mu}m_i(\mu)[\mu]$, where $m_i(\mu):=\#\{j\mid \mu_{i,j}=\mu\}$. We set
\[
\delta(\beta, \boldsymbol{\mu}):=(n-2)r-\sum_{i=1}^{n}m_i(-\beta^{H_i}).
\]
\begin{proposition}(Scholium 5.8. in \cite{Sim})\label{variation}
Given a $\boldsymbol{\mu}$-parabolic connection $(E, \nabla, l_*)$, set $(E', \nabla'):=\mc_{\beta}(E,\nabla)$. Let $\boldsymbol{\mu}'$ be the set of local exponents of $\nabla'$. Suppose that $\mu_{i,0}, \ldots, \mu_{i,r-1}$ don't differ by nonzero integers, $\mu+\beta^{H_i}+\beta^T\notin \mbZ$ and $\mu+\beta^{H_i}\notin \mbZ\setminus\{0\}$ for any local exponent of $\nabla'$ at $t_i$, and $\beta^T\notin \mbZ$. Then the rank of $E'$ is $r+\delta(\beta, \boldsymbol{\mu})$ and we have
\begin{align*}
	[\boldsymbol{\mu}']_i=
	&(m_i(-\beta^{H_i})+\delta(\beta, \boldsymbol{\mu}))[\beta^{V_i}]+\sum_{\mu+\beta^{H_i}\neq 0}m_i(\mu)[\mu+\beta^{U_i}].
\end{align*}
\end{proposition}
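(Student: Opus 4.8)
The plan follows Scholium 5.8 of \cite{Sim}. The assertion is local on $Z$, so I would treat separately the rank of $E'=\mbR^1\pi_{2*}\rMDR$ (read off from a generic fibre of $\pi_2$) and the residue of $\nabla^{\GM}_{\tmid}$ at each $t_i$ (read off from the degenerate fibre $\pi_2^{-1}(t_i)=V_i\cup U_i$, where $U_i$ is the exceptional curve, meeting $T$ and $H_i$, and $V_i$ is the strict transform of $Y\times\{t_i\}$, meeting $U_i$ and the $H_j$ for $j\neq i$).

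\emph{Rank.} Since $\pi_1$ restricts to an isomorphism on the generic fibre and on $V_i$, over the generic fibre $F\cong E$ has rank $r$ and degree $\deg E$, and $\Omega^1_{(X,J)/(Z,D)}$ restricts to $\Omega^1_{\pl}(\log(n+1\text{ points}))\cong\Opl(n-1)$ (the fibre meets $H_1,\dots,H_n$ and $T$). From $0\to F'\to F\to\mcF^0\to 0$ one gets $\deg(F'|_{\text{fibre}})=\deg E-\sum_i\dim\Coker(\res_{H_i}\nabla_F)$: the $T$-summand of $\mcF^0$ contributes nothing because $\res_T\nabla_F=\beta^T\,\id$ is invertible ($\beta^T\notin\mbZ$), while $\res_{H_i}\nabla_F=\pi_1^*(\res_{t_i}\nabla)+\beta^{H_i}\,\id$ has kernel of dimension $m_i(-\beta^{H_i})$. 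Computing the Euler characteristic of the fibre complex $[F\to F'\otimes\Omega^1_{(X,J)/(Z,D)}]$ by Riemann--Roch on $\pl$, and using $\mbR^0\pi_{2*}\rMDR=0$ (the point of passing from $\rDR$ to $\rMDR$: since $\pi_1^*E$ has trivial monodromy around the $T$-point of a fibre, $F$ has monodromy $e^{\pm 2\pi i\beta^T}\,\id\neq\id$ there, so there are no relative-horizontal sections), one gets $\rank E'=(n-1)r-\sum_i m_i(-\beta^{H_i})=r+\delta(\beta,\boldsymbol{\mu})$.

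\emph{Residue at $t_i$.} Using the standard description of the residue of a logarithmic Gauss--Manin connection in terms of the cohomology along the degenerate fibre, together with a Mayer--Vietoris decomposition of $V_i\cup U_i$ over the node $U_i\cap V_i$, the residue $\res_{t_i}\nabla^{\GM}_{\tmid}$ splits into a $V_i$-part and a $U_i$-part. Since $V_i$ is not a pole of $\pi_1^*\nabla$, $\res_{V_i}\nabla_F=\beta^{V_i}\,\id$, so the $V_i$-part is the single eigenvalue $\beta^{V_i}$; its multiplicity is the dimension of the degree-$1$ hypercohomology of $[F|_{V_i}\to F'|_{V_i}\otimes\Omega^1_{\pl}(\log(n\text{ points}))]$ on $V_i\cong\pl$, and since $F'|_{V_i}$ is $F|_{V_i}$ truncated at the $n-1$ points $V_i\cap H_j$ ($j\neq i$), Riemann--Roch gives $(n-2)r-\sum_{j\neq i}m_j(-\beta^{H_j})=\delta(\beta,\boldsymbol{\mu})+m_i(-\beta^{H_i})$, exactly the coefficient of $[\beta^{V_i}]$. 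On $U_i$, $\res_{U_i}\nabla_F=\pi_1^*(\res_{t_i}\nabla)+\beta^{U_i}\,\id$ has eigenvalues $\mu+\beta^{U_i}$, and the $U_i$-part carries these with multiplicities $m_i(\mu)$ --- except that the $H_i$-summand of $\mcF^0$ (supported over $t_i$, on the component $H_i$ meeting $U_i$) truncates exactly the $\res_{H_i}\nabla_F$-kernel, i.e.\ the part with $\mu=-\beta^{H_i}$; this is the ``middle'' truncation, leaving $\sum_{\mu+\beta^{H_i}\neq 0}m_i(\mu)[\mu+\beta^{U_i}]$. Adding the two parts gives $[\boldsymbol{\mu}']_i$.

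\emph{Genericity and the hard point.} The conditions that the $\mu_{i,\bullet}$ not differ by nonzero integers, that $\mu+\beta^{H_i}\notin\mbZ\setminus\{0\}$ and $\mu+\beta^{H_i}+\beta^T\notin\mbZ$ for local exponents $\mu$ of $\nabla'$, and $\beta^T\notin\mbZ$, are what make $\mbR^1\pi_{2*}\rMDR$ locally free (so that $\nabla^{\GM}$ is defined and commutes with base change), make $\mbR^0$ vanish, and guarantee that the residues above are the true ones rather than integer-shifted variants coming from Deligne's canonical extension. The main obstacle is the $U_i$-contribution: one must perform in a formal neighbourhood of $U_i$ an explicit vanishing-cycle computation for $\nabla_F$ on the blown-up surface and verify that the interaction of $\mbR^1\pi_{2*}$ with the subsheaf $F'$ --- whose $H_i$-truncation lies in the fibre over $t_i$ --- really reproduces the eigenvalues $\mu+\beta^{U_i}$ with multiplicities $m_i(\mu)$ for $\mu+\beta^{H_i}\neq 0$. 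Establishing local freeness of $\mbR^1\pi_{2*}\rMDR$, i.e.\ constancy of this hypercohomology dimension over $Z$, is comparably delicate and is precisely what the non-resonance hypotheses provide.
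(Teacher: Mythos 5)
First, a point of reference: the paper does not prove this proposition at all --- it is imported verbatim (with adapted notation) from Scholium 5.8 of Simpson \cite{Sim}, so there is no internal argument to compare yours against. Judged on its own terms, your outline correctly locates where every term of the formula comes from: your generic-fibre Euler-characteristic count $(n-1)r-\sum_i m_i(-\beta^{H_i})$ does equal $r+\delta(\beta,\boldsymbol{\mu})$; the coefficient $m_i(-\beta^{H_i})+\delta(\beta,\boldsymbol{\mu})$ of $[\beta^{V_i}]$ is exactly what the analogous count on the component $V_i$ (which meets $H_j$ only for $j\neq i$) produces; and the shifted exponents $\mu+\beta^{U_i}$ with the $\mu=-\beta^{H_i}$ part removed are exactly what the $H_i$-truncation defining $F'$ should leave on the exceptional component $U_i$. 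The vanishing of $\mbH^0$ via the nonintegral residue $\beta^T$ along $T$ is also the right mechanism.

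Nevertheless, as a proof this is incomplete, and the gaps are genuine rather than cosmetic. The two statements that carry all the content are precisely the ones you defer: (i) local freeness of $\mbR^1\pi_{2*}\rMDR$ together with base change, without which $\nabla^{\GM}_{\tmid}$ and its residues are not defined and the fibrewise dimension counts say nothing about the rank of $E'$; and (ii) the assertion that the residue of the logarithmic Gauss--Manin connection at $t_i$ decomposes along the two components of the nodal fibre $V_i\cup U_i$, acting by $\res_{V_i}\nabla_F=\beta^{V_i}\id$ on a block of the predicted size and by $\res_{U_i}\nabla_F$ on the complementary block. The latter is exactly the local vanishing-cycle computation near $U_i$ that you flag as ``the main obstacle''; naming it is not the same as doing it. Two smaller points also need to be filled in: the Euler-characteristic argument requires $\mbH^2$ of the fibrewise complex (i.e.\ $H^1$ of $F'\otimes\Omega^1_{(X,J)/(Z,D)}$ on the fibre) to vanish, not only $\mbH^0$; and the identification $\dim\operatorname{Coker}(\res_{H_i}\nabla_F)=m_i(-\beta^{H_i})$ silently equates geometric and algebraic multiplicity, which is where the hypothesis that the $\mu_{i,j}$ do not differ by nonzero integers (and the parabolic filtration condition) must be invoked. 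In short, your proposal is a faithful road map to Simpson's argument --- consistent with the paper's decision to cite rather than reprove the result --- but it should be presented as a sketch, not a proof.
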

We denote the residue of $\beta\in H^0(X, \Omega_X^1(\log J))$ along $H_i, V_i, U_i, T$ by $\beta^{H_i}, \beta^{V_i}, \beta^{U_i}, \beta^{T}$. We can find by the residue formula for suitable curves that residues satisfy the relations
\begin{equation}\label{betarel}
\beta^T+\sum_{i=1}^n\beta^{H_i}=0, \quad  \beta^T+\sum_{i=1}^n\beta^{V_i}=0, \quad \beta^{U_i}=\beta^{H_i}+\beta^{V_i}+\beta^{T}.
\end{equation}
Conversely, given $\beta^{H_i}, \beta^{V_i}, \beta^{U_i}, \beta^{T} \in \mathbb{C}$ satisfying the relations $(\ref{betarel})$, there is a unique $\beta \in H^0(X, \Omega_X^1(\log J))$ with these residues. In particular, $\beta$ with the conditions (\ref{betacond}) uniquely exists.

%%%%%%%%%%%%%%%%%%%%%%%%%%%%%%%%%%%%%%%%%%%%
\section{Proof of Main Theorem}\label{MTsec}
%%%%%%%%%%%%%%%%%%%%%%%%%%%%%%%%%%%%%%%%%%%%

We will prove Theorem \ref{MT} in the following three steps.
\begin{itemize}\setlength{\itemsep}{0cm}
\item[Step 1.]  Find a local basis of $\mathbb{R}^1\pi_{2*}\MDR_{X/T}$. 
\item[Step 2.] Express $\nabla^{\GM}_{\tmid}$ for the above basis. 
\item[Step 3.] Write down the apparent singularity $\overline{q}$ and the dual parameter $\overline{p}$ by $q$ and $p$.
\end{itemize}

Let us fix $\boldsymbol{\nu} \in \mcN^{00}$. For $(E, \nabla, l_*)\in M(\boldsymbol{\nu})$, we set
\begin{align*}
(G, \nabla_G):=&\otimes_+((E,\nabla))=(E,\nabla)\otimes (\Opl(1), d),\\
(F, \nabla_F):=&\pi_1^*(G, \nabla_G)\otimes (\Opl, d+\beta),\\
(G', \nabla_{G'}):=&\mc_{\beta}(G ,\nabla_G)=(\mbR^1\pi_{2*}\rMDR, \nabla^{\GM}_{\tmid}),
\\
(E', \nabla'):=&\otimes_-((G',\nabla_{G'}))=\mc_{\beta}(G, \nabla_G)\otimes (\Opl(-1), d).
\end{align*}
 Then we have $E\cong\Opl\oplus \Opl(-1)\oplus \Opl(-1)$ and  $G\cong\Opl(1)\oplus \Opl\oplus \Opl$. We will show  in Proposition \ref{G'loc} that $G'$ is a locally free sheaf, which implies that $(G', \nabla_{G'})$ is a connection. We will prove that $(E', \nabla')$ is made into a $w_3(\boldsymbol{\nu})$-parabolic connection in Proposition \ref{E'locexp}.
 
%%%%%%%%%%%%%%%%%%%%%%%%%%%%%%%%%%%%%%%%%%%%
\subsection{Step1: Find a local basis of $G'=\mathbb{R}^1\pi_{2*}\rMDR$}
%%%%%%%%%%%%%%%%%%%%%%%%%%%%%%%%%%%%%%%%%%%%

We take a basis $g_{0}, g_{1}, g_{2}$ (resp. $\tilde{g}_{0}, \tilde{g}_{1}, \tilde{g}_{2}$) of $G\cong \Opl(1)\oplus\Opl\oplus\Opl$ on $\pl\setminus \{0\}$ (resp. on $\pl\setminus \{\infty\}$) satisfying 
\begin{equation}\label{grelation}
g_{0}=\tilde{y}\tilde{g}_{0}, \quad  g_{1}=\tilde{g}_{1}, \quad g_{2}=\tilde{g}_{2}, 
\end{equation}
and
\[
\nabla_G(g_0, g_1, g_2)=(g_0, g_1, g_2)
\begin{pmatrix}
	0& a_{12}(y)& a_{13}(y)\\
	1& -p& 0\\
	0& y-q& p
\end{pmatrix}
\frac{dy}{y(y-1)}, 
\]
where $\tilde{y}=y^{-1}$, and $a_{12}(y), a_{13}(y)$ are the polynomials defined in the section \ref{normform}.

By the definition of $\rMDR$, there is an exact sequence
\begin{equation}\label{MDRex}
0 \to \pi_{2*}F\to\pi_{2*}(F'\otimes \Omega^1_{(X,J)/(Z,D)})\to G' \to\mathbb{R}^1\pi_{2*}F.
\end{equation}

\begin{lemma}\label{fpush}
$\pi_{2*}F\cong \mcO_Z^{\oplus 4}$ and $\mbR^1\pi_{2*}F=0$.
\end{lemma}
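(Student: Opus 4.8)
The plan is to reduce the computation of $\mathbb{R}^\bullet\pi_{2*}F$ to a cohomology computation on $Y=\mathbb{P}^1$. First I would observe that, as an $\mathcal{O}_X$-module, $F$ coincides with $\pi_1^*G$: the factor $(\mathcal{O}_X,d+\beta)$ contributes only to the connection, not to the underlying sheaf, so the underlying module of $F=\pi_1^*(G,\nabla_G)\otimes(\mathcal{O}_X,d+\beta)$ is $\pi_1^*G$. Moreover $\pi_1=p_1\circ\pi$, so $\pi_1^*G=\pi^*p_1^*G$. Hence it suffices to compute $\mathbb{R}^\bullet\pi_{2*}(\pi^*p_1^*G)$.

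Next I would push forward along $\pi$ first. Since $\pi\colon X\to Y\times Z$ is the blow-up of the smooth surface $Y\times Z$ at the finitely many reduced points $(t_i,t_i)$, one has $\pi_*\mathcal{O}_X=\mathcal{O}_{Y\times Z}$ and $\mathbb{R}^1\pi_*\mathcal{O}_X=0$. The projection formula — applicable because $p_1^*G$ is locally free — then gives $\mathbb{R}\pi_*(\pi^*p_1^*G)\cong p_1^*G$ concentrated in degree zero. Composing derived pushforwards for $\pi_2=p_2\circ\pi$ (Leray) yields $\mathbb{R}^q\pi_{2*}F\cong \mathbb{R}^qp_{2*}(p_1^*G)$ for all $q$.

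Finally I would apply flat base change for the Cartesian square formed by the two projections of $Y\times Z$ (equivalently, the Künneth formula on $\mathbb{P}^1\times\mathbb{P}^1$) to obtain $\mathbb{R}^qp_{2*}(p_1^*G)\cong H^q(Y,G)\otimes_{\mathbb{C}}\mathcal{O}_Z$. Since $G\cong\mathcal{O}_{\mathbb{P}^1}(1)\oplus\mathcal{O}_{\mathbb{P}^1}\oplus\mathcal{O}_{\mathbb{P}^1}$, we have $H^0(Y,G)\cong\mathbb{C}^2\oplus\mathbb{C}\oplus\mathbb{C}\cong\mathbb{C}^4$ and $H^1(Y,G)=0$. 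Therefore $\pi_{2*}F\cong\mathcal{O}_Z^{\oplus4}$ and $\mathbb{R}^1\pi_{2*}F=0$, as asserted.

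I do not expect a serious obstacle here. The only points that require a little care are the first reduction — confirming that the twist by $(\mathcal{O}_X,d+\beta)$ leaves the underlying module unchanged and correctly identifying $\pi_1^*G$ with $\pi^*p_1^*G$ — and checking that the vanishing $\mathbb{R}^1\pi_*\mathcal{O}_X=0$ for a point blow-up of a smooth surface is exactly what makes the projection formula and the subsequent Leray step go through cleanly.
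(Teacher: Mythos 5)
Your argument is correct, and it takes a genuinely different route from the paper. The paper works fiberwise: for $\mathbb{R}^1\pi_{2*}F$ it identifies the fiber at $t$ with $H^1(X_t,F|_t)$ and, for $t\in D_Z$ where the fiber $X_t=U_t\cup V_t$ is reducible, kills $H^1$ via the Mayer--Vietoris-type sequence $0\to \mcO_{U_t}(-w_t)\otimes F\to F|_t\to \mcO_{V_t}\otimes F\to 0$; for $\pi_{2*}F$ it exhibits explicit local generators $g_0, yg_0, g_1, g_2$ and $\tilde g_0,\tilde y\tilde g_0,\tilde g_1,\tilde g_2$ and matches them via (\ref{grelation}). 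You instead strip the connection to get $F\cong\pi^*p_1^*G$ as a sheaf, use $\mathbb{R}\pi_*\mcO_X\cong\mcO_{Y\times Z}$ for the point blow-up together with the projection formula and Leray to reduce to $\mathbb{R}^qp_{2*}(p_1^*G)\cong H^q(Y,G)\otimes_{\mbC}\mcO_Z$, and conclude from $H^0(\pl,G)\cong\mbC^4$, $H^1(\pl,G)=0$. Your approach is shorter and uniform in $t$ — it avoids the case analysis over the reducible fibers entirely — while the paper's computation has the practical advantage of producing the explicit frame $g_0, yg_0, g_1, g_2$ of $\pi_{2*}F$, which is used immediately afterwards in the proof of Proposition \ref{G'loc} to compute $\Image \pi_{2*}\overline{\nabla}_F$; of course that frame is also recovered from your Künneth identification, since these sections form the natural basis of $H^0(Y,G)$.
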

\begin{proof}
First we consider $\mbR^1\pi_{2*}F$. The fiber of $\mbR^1\pi_{2*}F$ at $t\in Z$ is naturally isomorphic to $H^1(X_t, F|_t)$.
When $t\in Z\setminus D_Z$, we have $(\mbR^1\pi_{2*}F)|_{t}\cong H^1(X_t, F|_t)\cong H^1(\pl, G)=0$. When $t\in D_Z$, there is a short exact sequence
\[
0\to \mcO_{U_t}(-w_t)\otimes F\to F|_t=\mcO_{X_t}\otimes F\to \mcO_{V_t}\otimes F\to 0
\]
where $w_t$ is the point in $U_t\cap V_t$.The above short exact sequence induces
\[
H^1(\pl, \Opl(-1)^{\oplus 3})\cong H^1(U_t, \mcO_{U_t}(-w_t)\otimes F)\longrightarrow H^1(X_t, F|_t)\longrightarrow H^1(V_t, \mcO_{V_t}\otimes F)\cong H^1(\pl, G).
\]
Since both sides are zero, we have $H^1(X_t, F|_t)=0$. We hence obtain $\mbR^1\pi_{2*}F=0$.

Next we consider $\pi_{2*}F$. We have $(\pi_{2*}F)|_{t}\cong H^1(X_t, F|_t)\cong H^0(\pl, G)$ for $t\in Z\setminus D_Z$. In particular, we find that  $\pi_{2*}F$ is generated by $g_{0}, yg_{0}, g_{1}, g_{2}$ (resp. $\tilde{g}_{0}, \tilde{y}\tilde{g}_{0}, \tilde{g}_{1}, \tilde{g}_{2}$) on $Z\setminus \{0\}$ (resp. $Z\setminus \{\infty\}$). Hence we have $\pi_{2*}F\cong \mcO_Z^{\oplus 4}$ by the relation (\ref{grelation}).
\end{proof}
We obtain $G' \cong\pi_{2*}(F'\otimes \Omega^1_{(X,J)/(Z,D)})/ \pi_{2*}F$ by the above lemma and the exact sequence (\ref{MDRex}). We can hence regard sections of $G'$ as equivalence classes of sections of $\pi_{2*}(F'\otimes \Omega^1_{(X,J)/(Z,D)})$. We will find a local basis of $\pi_{2*}(F'\otimes \Omega^1_{(X,J)/(Z,D)})$. The short exact sequence (\ref{fprime}) induces
\begin{equation}\label{fprimepush}
0 \to \pi_{2*}(F'\otimes \Omega^1_{(X,J)/(Z,D)}) \to \pi_{2*}(F\otimes \Omega^1_{(X,J)/(Z,D)})\to  \pi_{2*}\mcF^0.
\end{equation}
Since $\pi_1^*\nabla_{G}$ is holomorphic along $T$ and $\beta^T\neq 0$, we have $\Image(\res_T\nabla_{F})=\Image(\beta^T \id)=F|_T$. We hence get by definition the isomorphisms
\[
\mcF^0\cong \bigoplus_{i=0, 1, \infty}(G|_i/\Image(\res_{i}\nabla_{G}+\beta^{H_i}))\otimes \mcO_{H_i}
\]
and
\[
\pi_{2*}\mcF^0 \cong (\oplus_{i=0, 1, \infty}(G|_i/\Image(\res_{i}\nabla_{G}+\beta^{H_i})))\otimes \mcO_{Z}.
\]
By direct computation, we have
\begin{equation}\label{Image}
\begin{aligned}
&\Image (\res_0\nabla_{G}+\beta^{H_0})=
\mathbb{C}(\nu_{0,0}g_0+g_1)+\mathbb{C}(\alpha_0g_0+g_2),
\quad \\
&\Image (\res_1\nabla_{G}+\beta^{H_1})=
\mathbb{C}(-\nu_{1,0}g_0+g_1)+\mathbb{C}(\alpha_1g_0+g_2),\\
&\Image (\res_\infty\nabla_{G}+\beta^{H_\infty})=
\mathbb{C}(\nu_{\infty,0}g_0+g_1)+\mathbb{C}(\alpha_\infty g_0+g_2), 
\end{aligned}
\end{equation}
where $\alpha_0=(p+\nu_{0,1})(p+\nu_{0,2})/q, \alpha_1=(p-\nu_{1,1})(p-\nu_{1,2})/(q-1), \alpha_\infty=-(\nu_{\infty,1}-\lambda)(\nu_{\infty,2}-\lambda)$. So $\pi_{2*}\mcF^0$ is a trivial bundle of rank three.

A section of $\pi_{2*}(F\otimes \xjzq{1})$ on $Z\setminus \{\infty\}$ is written by the form
\[
(a_{00}g_0+a_{01}g_1+a_{02}g_2)\tfrac{dy}{y}+(a_{10}g_0+a_{11}g_1+a_{20}g_2)\tfrac{dy}{y-1}+(a_{0}g_0+a_{1}g_0+a_{2}g_0)\tfrac{dy}{y-z}+a_{\infty 0}g_0\otimes dy.
\]
We have the following lemma by (\ref{fprimepush}) and (\ref{Image}). 

\begin{lemma}\label{fprimegen}
$\pi_{2*}(F'\otimes \xjzq{1})$ is generated on $Z\setminus \{\infty\}$ by
\[
\begin{array}{ll}
\eta_{1}:=(\nu_{0,0}g_{0}+g_{1})\otimes \tfrac{dy}{y}+\nu_{\infty,0}g_{0}\otimes dy,  
&
\eta_{2}:=(\alpha_0g_{0}+g_{2})\otimes \tfrac{dy}{y}+
\alpha_\infty g_{0}\otimes dy\\[4pt]%%%%%%%%%%%%%%%%%
\eta_{3}:=(-\nu_{1,0}g_{0}+g_{1})\otimes \tfrac{dy}{y-1}+\nu_{\infty,0}g_{0}\otimes dy, 
&
\eta_{4}:=(\alpha_1g_{0}+g_{2})\otimes \tfrac{dy}{y-1}+
\alpha_\infty g_{0}\otimes dy\\[4pt]%%%%%%%%%%%%%%%%%%
\eta_{5}:=g_{1}\otimes \tfrac{dy}{y-z}+\nu_{\infty,0}g_{0}\otimes dy,  
&
\eta_{6}:=g_{2}\otimes \tfrac{dy}{y-z}+
\alpha_\infty g_{0}\otimes dy\\[4pt]%%%%%%%%%%%%%%%%%%%
\eta_{7}:=g_{0}\otimes \tfrac{dy}{y-z}.
\end{array}
\]
\end{lemma}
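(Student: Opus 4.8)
The plan is to identify $\pi_{2*}(F'\otimes\Omega^1_{(X,J)/(Z,D)})$ locally on $Z\setminus\{\infty\}$ by combining the exact sequence (\ref{fprimepush}) with the explicit description of $\pi_{2*}\mcF^0$ obtained from (\ref{Image}). First I would describe the sheaf $\pi_{2*}(F\otimes\Omega^1_{(X,J)/(Z,D)})$ on $Z\setminus\{\infty\}$ explicitly. A relative logarithmic $1$-form along the fibers of $\pi_2$ over a point $z\in Z\setminus\{\infty\}$ has, on the fiber $\pl$ in the $y$-coordinate, simple poles permitted at $y=0,1,z$ (the points $H_0\cap X_z$, $H_1\cap X_z$, $T\cap X_z$) together with the pole at $y=\infty$ controlled by $V_\infty$ and $U_\infty$; twisting by $G\cong\Opl(1)\oplus\Opl\oplus\Opl$ and taking global sections on the fiber gives exactly a free $\mcO_{Z\setminus\{\infty\}}$-module with the basis written in the paragraph preceding the lemma: the three ``$g_0$'' coefficients may be degree-one in $y$ (hence the extra term $a_{\infty 0}g_0\otimes dy$), while the ``$g_1$'' and ``$g_2$'' coefficients are scalars. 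A dimension count (or a direct cohomology computation on the fiber as in Lemma \ref{fpush}) confirms this module has rank $10$ and $\mbR^1\pi_{2*}(F\otimes\Omega^1_{(X,J)/(Z,D)})$ contributes nothing obstructing the argument.

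Next I would compute the cokernel map $\pi_{2*}(F\otimes\Omega^1_{(X,J)/(Z,D)})\to\pi_{2*}\mcF^0$ appearing in (\ref{fprimepush}). By the identification of $\mcF^0$ with $\bigoplus_{i=0,1,\infty}(G|_i/\Image(\res_i\nabla_G+\beta^{H_i}))\otimes\mcO_{H_i}$ and the fact that taking residues of a relative log form at $H_i$ is exactly reading off the coefficient of $\tfrac{dy}{y-t_i}$ evaluated in $G|_{t_i}$, the map sends a section to the triple of classes of its residues at $y=0,1,\infty$ modulo the images listed in (\ref{Image}). Using the explicit generators of those images, $\pi_{2*}(F'\otimes\Omega^1_{(X,J)/(Z,D)})$ is the kernel of this surjection onto a rank-three trivial bundle, hence free of rank $10-3=7$. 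The seven elements $\eta_1,\dots,\eta_7$ are then exhibited as lying in that kernel — each $\eta_j$ is built so that its residue at the relevant $H_i$ is precisely one of the chosen generators spanning $\Image(\res_i\nabla_G+\beta^{H_i})$, or (for $\eta_5,\eta_6,\eta_7$, whose only pole beyond $\infty$ is at $y=z$, i.e.\ along $T$, not along any $H_i$) has zero residue at every $H_i$ — and therefore maps to $0$ in $\pi_{2*}\mcF^0$.

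To finish I would verify that $\eta_1,\dots,\eta_7$ actually generate (not merely lie in) the rank-seven kernel: it suffices to check at each point $z$ of $Z\setminus\{\infty\}$ that their images are linearly independent in the fiber, which follows because the $\tfrac{dy}{y}$, $\tfrac{dy}{y-1}$, $\tfrac{dy}{y-z}$ parts are supported at distinct poles and the seven leading terms $(\nu_{0,0}g_0+g_1)$, $(\alpha_0g_0+g_2)$, $(-\nu_{1,0}g_0+g_1)$, $(\alpha_1g_0+g_2)$, $g_1$, $g_2$, $g_0$ are visibly independent as one varies which pole they decorate; alternatively, a rank count against the free rank-seven module forces them to be a basis once independence at one point is known. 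The small wrinkle to watch is the behaviour of the $g_0\otimes dy$ correction terms: each $\eta_j$ carries an $\nu_{\infty,0}g_0\otimes dy$ or $\alpha_\infty g_0\otimes dy$ tail chosen exactly so that the total form has the prescribed residue along $V_\infty$ (so that it genuinely defines a section of $F'$ near $\infty$ in this chart), and one must confirm these tails do not destroy independence — they do not, since they only affect the single $g_0$-direction that $\eta_7$ already isolates.

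The main obstacle I anticipate is bookkeeping the twist by $\beta$ and by $\Opl(1)$ correctly when reading off residues: the images in (\ref{Image}) are computed for $\res_i\nabla_G+\beta^{H_i}$, and one has to be careful that the generators of $\pi_{2*}(F'\otimes\Omega^1_{(X,J)/(Z,D)})$ are matched to these shifted images rather than to $\res_i\nabla_G$ alone, and that the $y=\infty$ contribution is handled by the $V_\infty$/$U_\infty$ residue relations (\ref{betarel}) rather than by a naive pole count. Once the residue map is pinned down, the rest is the linear-algebra verification above.
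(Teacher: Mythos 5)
Your proposal is correct and follows the same route the paper intends: the proof of this lemma is precisely to combine the left-exact sequence (\ref{fprimepush}) with the explicit images (\ref{Image}), using the displayed rank-ten description of $\pi_{2*}(F\otimes \xjzq{1})$ on $Z\setminus\{\infty\}$ and reading the map to $\pi_{2*}\mcF^0$ as residues at $H_0, H_1, H_\infty$ modulo those images, so that the kernel is free of rank $10-3=7$ with the $\eta_j$ as a basis. One small correction to your bookkeeping: the divisor over $y=\infty$ in these fibres is $H_\infty$ (not $V_\infty$ or $U_\infty$), and $\eta_1,\eta_3,\eta_5,\eta_6$ have \emph{nonzero} residue there which the $g_0\otimes dy$ tails push into $\Image(\res_\infty\nabla_G+\beta^{H_\infty})$ --- so the criterion should read ``the residue lies in the image at each $H_i$'' throughout, rather than ``zero residue at every $H_i$'' for $\eta_5,\eta_6$.
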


\begin{proposition}\label{G'loc}
	$\pi_{2*}(F'\otimes \xjzq{1})\cong \mcO_Z(1)\oplus \mcO_Z^{\oplus 6}$ and $G'\cong \mcO_Z(1)\oplus \mcO_Z\oplus \mcO_Z$.
\end{proposition}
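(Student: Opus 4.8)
\emph{Approach.} The plan is to first determine the splitting type on $Z\cong\pl$ of the rank-seven locally free sheaf $\mcP:=\pi_{2*}(F'\otimes\xjzq{1})$, and then to describe $G'$ as a quotient of $\mcP$ by a trivial subsheaf. Indeed, the exact sequence (\ref{MDRex}) together with $\mbR^1\pi_{2*}F=0$ from Lemma~\ref{fpush} gives $G'\cong\mcP/\overline\nabla_F(\pi_{2*}F)$, and Lemma~\ref{fpush} also identifies $\pi_{2*}F\cong\mcO_Z^{\oplus4}$; so both assertions reduce to explicit bundle computations on $\pl$.

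\emph{Step 1: the bundle $\mcP$.} Lemma~\ref{fprimegen} already gives a free basis $\eta_1,\dots,\eta_7$ of $\mcP$ over $Z\setminus\{\infty\}$. I would rerun the argument of Lemma~\ref{fprimegen} over $Z\setminus\{0\}$, using the local frame $\tilde g_0,\tilde g_1,\tilde g_2$ of $G$ near $y=\infty$, the coordinates $\tilde y=y^{-1}$ and $\tilde z=z^{-1}$, and the residues of $\beta$ along the components of $J$ meeting the fibre over $z=\infty$, which are fixed by (\ref{betarel}) and (\ref{betacond}); this yields a free basis $\tilde\eta_1,\dots,\tilde\eta_7$ of $\mcP$ over $Z\setminus\{0\}$. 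Rewriting the $\eta_i$ in the $\tilde\eta_j$ over $Z\setminus\{0,\infty\}$ via $g_0=\tilde y\tilde g_0$, $g_1=\tilde g_1$, $g_2=\tilde g_2$, $\tfrac{dy}{y}=-\,d\tilde y$, $dy=-\tilde y^{-1}d\tilde y$, $\tfrac{dy}{y-z}=\tfrac{\tilde z\,d\tilde y}{\tilde y-\tilde z}$, one obtains a transition matrix $M(z)$. Performing row operations with coefficients regular on $Z\setminus\{0\}$ and column operations with coefficients regular on $Z\setminus\{\infty\}$, one reduces $M(z)$ to a diagonal matrix whose entries are, up to invertible Laurent monomials, $z,1,1,1,1,1,1$; since the degrees of the resulting line subbundles must sum to $\deg\mcP$, the unique non-trivial summand has degree $1$, so $\mcP\cong\mcO_Z(1)\oplus\mcO_Z^{\oplus6}$. (Alternatively one may read $\deg\mcP$ off (\ref{fprimepush}) and the triviality of $\pi_{2*}\mcF^0$ and then only check $h^0(\mcP(-1))=1$, but the transition matrix is needed for Step~2 regardless.)

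\emph{Step 2: the bundle $G'$.} Using $\nabla_F=\pi_1^*\nabla_G+\beta$ and the explicit matrix of $\nabla_G$, I would compute the four sections $\overline\nabla_F(g_0),\overline\nabla_F(yg_0),\overline\nabla_F(g_1),\overline\nabla_F(g_2)$ spanning $\overline\nabla_F(\pi_{2*}F)$ over $Z\setminus\{\infty\}$ and express them in the basis $\eta_1,\dots,\eta_7$ (and similarly near $z=\infty$ in the tilded data). The aim is to show that these four sections, completed by three of the $\eta_i$ after a $z$-dependent change of frame if necessary, form a free basis of $\mcP$ over $Z\setminus\{\infty\}$, and likewise over $Z\setminus\{0\}$; this exhibits $\overline\nabla_F(\pi_{2*}F)$ as a subbundle, so $G'$ is locally free of rank $3$ with $\deg G'=\deg\mcP-\deg\pi_{2*}F=1$. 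Finally, since $G'$ is a quotient of $\mcP\cong\mcO_Z(1)\oplus\mcO_Z^{\oplus6}$, no summand of its splitting type can be $\mcO_Z(-k)$ with $k\geq1$: such a summand would force the surjection $\mcO_Z(1)\oplus\mcO_Z^{\oplus6}\twoheadrightarrow G'$ to vanish on that factor, which is impossible. Together with $\rank G'=3$ and $\deg G'=1$ this forces $G'\cong\mcO_Z(1)\oplus\mcO_Z\oplus\mcO_Z$.

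\emph{Main obstacle.} The delicate part is the behaviour over the special points $z\in\{0,1,\infty\}$, where the fibre of $\pi_2$ acquires the exceptional component $U_i$ and the naive frames $\eta_i$ and $g_0,yg_0,g_1,g_2$ degenerate; in particular, Lemma~\ref{fprimegen} gives generators only away from $z=\infty$. Thus the decisive checks — that the twist is exactly one copy of $\mcO_Z(1)$ rather than some $\mcO_Z(a)\oplus\mcO_Z(1-a)$ with $a\geq2$, and that $\overline\nabla_F(\pi_{2*}F)$ remains a subbundle at these three points — take place precisely where the local descriptions are weakest, and carrying the tilded frames and transition formulas cleanly through this region is the computational heart of the argument.
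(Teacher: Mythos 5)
Your proposal is correct and follows essentially the same route as the paper: generators $\eta_1,\dots,\eta_7$ and $\tilde\eta_1,\dots,\tilde\eta_7$ on the two charts (the transition turns out to be already diagonal, $\eta_i=\tilde\eta_i$ for $i\le 6$ and $\eta_7=\tilde z\tilde\eta_7$), followed by presenting $G'$ as the cokernel of $\pi_{2*}\overline{\nabla}_F$ via the explicit images of $g_0, yg_0, g_1, g_2$, which eliminate $\eta_1,\eta_3,\eta_5,\eta_6$ since $\beta^T=\gamma\neq 0$. The only cosmetic difference is at the very end: the paper reads off the splitting of $G'$ from the explicit residual basis $[\eta_2],[\eta_4],[\eta_7]$ on each chart, whereas you close with the equally valid rank/degree count combined with the observation that a quotient of $\mcO_Z(1)\oplus\mcO_Z^{\oplus 6}$ admits no negative summands.
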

\begin{proof}
In the same manner of Lemma \ref{fprimegen}, we can see that $\pi_{2*}(F'\otimes \xjzq{1})$ is generated on $Z\setminus \{0\}$ by
\[
\begin{array}{ll}
\tilde{\eta}_{1}:=
-(\nu_{\infty,0}\tilde{g}_{0}+\tilde{g}_{1})\otimes \tfrac{d\tilde{y}}{\tilde{y}}-\nu_{0,0}\tilde{g}_{0}\otimes d\tilde{y},  
&
\tilde{\eta}_{2}:=
-(\alpha_\infty\tilde{g}_{0}+\tilde{g}_{2})\otimes \tfrac{d\tilde{y}}{\tilde{y}}-
\alpha_0 \tilde{g}_{0}\otimes d\tilde{y}\\[4pt]%%%%%%%%%%%%%%%%%
\tilde{\eta}_{3}:=-
(\nu_{\infty,0}\tilde{g}_{0}+\tilde{g}_{1})\otimes \tfrac{d\tilde{y}}{\tilde{y}-1}+(-\nu_{1,0}\tilde{g}_{0}+\tilde{g}_{1})\otimes d\tilde{y}, 
&
\tilde{\eta}_{4}:=-
(\alpha_\infty\tilde{g}_{0}+\tilde{g}_{2})\otimes \tfrac{d\tilde{y}}{\tilde{y}-1}+
(\alpha_1\tilde{g}_{0}+\tilde{g}_{2})\otimes d\tilde{y}\\[4pt]%%%%%%%%%%%%%%%%%%
\tilde{\eta}_{5}:=
\tilde{g}_{1}\otimes \tfrac{d\tilde{y}}{\tilde{y}-\tilde{z}}-(\nu_{\infty,0}\tilde{g}_{0}+\tilde{g}_{1})\otimes d\tilde{y},  
&
\tilde{\eta}_{6}:=
\tilde{g}_{2}\otimes \tfrac{d\tilde{y}}{\tilde{y}-\tilde{z}}-
(\alpha_\infty\tilde{g}_{0}+\tilde{g}_{2})\otimes d\tilde{y}\\[4pt]%%%%%%%%%%%%%%%%%%%
\tilde{\eta}_{7}:=
\tilde{g}_{0}\otimes \tfrac{d\tilde{y}}{\tilde{y}-\tilde{z}}.
\end{array}
\]
These sections satisfy the following relation;
\[
\eta_{1}=\tilde{\eta}_{1}, \ \eta_{2}=\tilde{\eta}_{2}, \ \eta_{3}=\tilde{\eta}_{3}, \ \eta_{4}=\tilde{\eta}_{4}, \ \eta_{5}=\tilde{\eta}_{5}, \ \eta_{6}=\tilde{\eta}_{6},  \ \eta_{7}=\tilde{z}\tilde{\eta}_{7},
\]
which yields $\pi_{2*}(F'\otimes \xjzq{1}) \cong \mcO_Z(1)\oplus \mcO_Z^{\oplus 6}$. We note that $\mcO_Z(1)$ is generated by $\eta_7$.
	
 Now we have 
\[
G' \cong \Coker \left(\pi_{2*}\overline{\nabla}_F \colon \pi_{2*}F \rightarrow \pi_{2*}(F'\otimes \Omega^1_{(X,J)/(Z,D)})\right).
\]
 We will see generators of $\Image \pi_{2*}\overline{\nabla}_F$. The sheaf $\pi_{2*}F$ is generated by $g_0, yg_0, g_1, g_2$ on $Z\setminus\{0\}$. By direct computation, we get
\begin{align*}
\overline{\nabla}_F(g_0)=&-\eta_1+\eta_3+\beta^T \eta_7,\\
\overline{\nabla}_F(yg_0)=&\eta_3+\beta^Tz \eta_7,\\
\overline{\nabla}_F(g_1)=&(p-\nu_{0,0})\eta_1+q\eta_2-(p+\nu_{1,0})\eta_3-(q-1)\eta_4+\beta^T \eta_5,\\
\overline{\nabla}_F(g_2)=&-(p+\nu_{0,0})\eta_2+(p-\nu_{1,0})\eta_4+\beta^T \eta_6. 
\end{align*}
So we can take $[\eta_2], [\eta_4], [\eta_7]$ as a basis of $G'$ on $Z\setminus \{\infty\}$. We denote the image of a section $\eta \in \pi_{2*}(F'\otimes \xjzq{1})$ by $[\eta]$ here.
 We hence obtain $G'\cong \mcO_Z(1)\oplus \mcO_Z\oplus \mcO_Z$.
\end{proof}

We apply Proposition \ref{variation} to our setting. 
\begin{proposition}\label{E'locexp}
	Local exponents of $(E', \nabla')$ at $t_i$ are $\nu_{i,0}-\frac{2}{3}\gamma, \nu_{i,1}+\frac{1}{3}\gamma, \nu_{i,2}+\frac{1}{3}\gamma$. Hence, we can make $(E', \nabla')$ into a $w_3(\boldsymbol{\nu})$-parabolic connection by considering a parabolic structure $l'_*$, which is uniquely determined. In particular, the following diagram is commutative;
	\[
	\begin{tikzcd}%[ampersand replacement=\&]
		\mcM\arrow[r, "\otimes_{-}\circ \mc_\beta \circ \otimes_+"]\arrow[d]&[40pt]\mcM\arrow[d]\\
		\mcN^{00}\arrow[r, "w_3"]&\mcN^{00}.
	\end{tikzcd}
	\]
\end{proposition}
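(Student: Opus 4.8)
The plan is to verify Proposition~\ref{E'locexp} by applying Proposition~\ref{variation} (Simpson's Scholium~5.8) with $r=3$, $n=3$, and $\boldsymbol{\mu}=\boldsymbol{\nu}$, to the connection $(G,\nabla_G)=\otimes_+((E,\nabla))$, then untwisting by $\otimes_-$ at the end. First I would record the residue data of $\beta$ from \eqref{betacond}: $\beta^{H_i}=-\nu_{i,0}+\delta_{\infty,0}$ and $\beta^{V_i}=\nu_{i,0}-\delta_{\infty,0}-\tfrac{2}{3}\gamma$, with $\beta^T=\gamma$; note that since $\boldsymbol{\nu}\in\mcN^{00}$ and $\gamma\notin\mbZ$ (because $\gamma=\chi(\alpha_3)$ and $\boldsymbol{\nu}\in\mcN^0$ forces the non-integrality $\nu_{0,0}+\nu_{1,0}+\nu_{\infty,0}\notin\mbZ$), the hypotheses $\beta^T\notin\mbZ$, $\mu+\beta^{H_i}+\beta^T\notin\mbZ$, and $\mu+\beta^{H_i}\notin\mbZ\setminus\{0\}$ can be checked directly from the membership conditions defining $\mcN^{00}$; the condition that $\mu_{i,0},\mu_{i,1},\mu_{i,2}$ differ by no nonzero integers is exactly the defining condition of $\mcN^{00}$.

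Next I would compute the combinatorial quantities in Proposition~\ref{variation}. The twist by $\Opl(1)$ shifts all local exponents of $\nabla_G$ at $t_i$ by the same constant relative to $\nabla$, but what matters is the multiplicity structure: since the $\nu_{i,j}$ are pairwise distinct, $m_i(\mu)\in\{0,1\}$, and in particular $m_i(-\beta^{H_i})=m_i(\nu_{i,0}-\delta_{\infty,0})$ — after accounting for the $\Opl(1)$-twist, $-\beta^{H_i}$ equals precisely the $0$-th local exponent of $\nabla_G$ at $t_i$, so $m_i(-\beta^{H_i})=1$ for each $i$. Hence $\delta(\beta,\boldsymbol{\mu})=(n-2)r-\sum_{i}m_i(-\beta^{H_i})=1\cdot 3-3=0$, so the rank is preserved: $\rank G'=r+\delta=3$, consistent with Proposition~\ref{G'loc}. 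The exponent formula then gives $[\boldsymbol{\mu}']_i=(1+0)[\beta^{V_i}]+\sum_{\mu+\beta^{H_i}\neq 0}m_i(\mu)[\mu+\beta^{U_i}]$, i.e.\ one exponent equal to $\beta^{V_i}$ and the other two equal to $\nu_{i,1}+\beta^{U_i}$ and $\nu_{i,2}+\beta^{U_i}$ (the term $\mu=\nu_{i,0}$ drops out since $\nu_{i,0}+\beta^{H_i}=\delta_{\infty,0}-\delta_{\infty,0}=0$). Using $\beta^{U_i}=\beta^{H_i}+\beta^{V_i}+\beta^T=-\delta_{\infty,0}+\gamma/3$ by \eqref{betarel} and \eqref{betacond}, these become $\nu_{i,0}-\delta_{\infty,0}-\tfrac{2}{3}\gamma$, $\nu_{i,1}-\delta_{\infty,0}+\tfrac{1}{3}\gamma$, $\nu_{i,2}-\delta_{\infty,0}+\tfrac{1}{3}\gamma$. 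Finally, applying $\otimes_-=\otimes(\Opl(-1),d)$ shifts each local exponent at $t_i$ back by $+\delta_{\infty,0}$ (the canonical connection on $\Opl(-1)$ contributes $0$ at $t=0,1$ and $+1$ at $t=\infty$), yielding exactly $\nu_{i,0}-\tfrac{2}{3}\gamma$, $\nu_{i,1}+\tfrac{1}{3}\gamma$, $\nu_{i,2}+\tfrac{1}{3}\gamma$ — which is the claimed list. One checks this matches $w_3(\boldsymbol{\nu})$ using the explicit formula for $w_3$ on $\mcN^0$ derived from \eqref{int}; since $w_3$ is the reflection in $\alpha_3=\mcE_0-\mcE_1-\mcE_4-\mcE_6$ and $\chi(\alpha_3)=\gamma$, the induced substitution is precisely $\nu_{i,0}\mapsto\nu_{i,0}-\tfrac{2}{3}\gamma$, $\nu_{i,j}\mapsto\nu_{i,j}+\tfrac{1}{3}\gamma$ for $j=1,2$ (and the sum condition $\sum_j\nu_{i,j}=2\delta_{i,\infty}$ is preserved since $-\tfrac{2}{3}\gamma+\tfrac{1}{3}\gamma+\tfrac{1}{3}\gamma=0$). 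Since the three $w_3(\boldsymbol{\nu})_{i,j}$ are pairwise distinct (as $\gamma\neq 0$ preserves distinctness under this affine shift, and one checks $w_3(\boldsymbol{\nu})\in\mcN^{00}$), Remark~\ref{remark}(1) gives a unique parabolic structure $l'_*$ making $(E',\nabla',l'_*)$ a $w_3(\boldsymbol{\nu})$-parabolic connection.

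With the local exponents pinned down, the commutativity of the square follows formally: the transformation $(E,\nabla,l_*)\mapsto(E',\nabla',l'_*)$ is defined functorially on families (middle convolution is defined via relative de Rham cohomology and base change, and the two tensor operations are functorial), so it induces a morphism $\mcM\to\mcM$; the vertical maps to $\mcN^{00}$ send a connection to its local exponent data, and we have just shown the composite along the top followed by the right vertical equals $w_3$ followed by the left vertical. The main obstacle is not this last formal step but ensuring that all the non-integrality hypotheses of Proposition~\ref{variation} genuinely hold on all of $\mcN^{00}$ — in particular verifying that every local exponent $\mu'$ of $\nabla'$ at $t_i$ (which we must compute \emph{before} we are allowed to invoke the formula, creating a mild circularity that Simpson's scholium resolves by a fixed-point/bootstrapping argument) satisfies $\mu'+\beta^{H_i}\notin\mbZ\setminus\{0\}$ and $\mu'+\beta^{H_i}+\beta^T\notin\mbZ$; unwinding these in terms of the $\nu_{i,j}$ and $\gamma$ shows they reduce to the conditions $\nu_{i,j}-\nu_{i,k}\notin\mbZ\setminus\{0\}$ and $\nu_{0,j_0}+\nu_{1,j_1}+\nu_{\infty,j_\infty}\notin\mbZ$, which are precisely the defining conditions of $\mcN^{00}\subset\mcN^0$. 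Checking this bookkeeping carefully — and confirming that $G'$ is locally free so that $\nabla^{\GM}_{\tmid}$ is honestly a connection, which is Proposition~\ref{G'loc} — is where the real work lies.
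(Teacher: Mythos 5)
Your proposal is correct and follows essentially the same route as the paper: set $\mu_{i,j}=\nu_{i,j}-\delta_{i,\infty}$ for the twisted connection $(G,\nabla_G)$, observe $m_i(-\beta^{H_i})=1$ so $\delta(\beta,\boldsymbol{\mu})=0$, apply Proposition \ref{variation} to get $[\boldsymbol{\mu}']_i=[\mu_{i,0}-\tfrac{2}{3}\gamma]+[\mu_{i,1}+\tfrac{1}{3}\gamma]+[\mu_{i,2}+\tfrac{1}{3}\gamma]$, untwist by $\otimes_-$, and match with the explicit formula for $w_3(\boldsymbol{\nu})$. Your additional bookkeeping on the non-integrality hypotheses and on local freeness (Proposition \ref{G'loc}) is consistent with what the paper does elsewhere, and the minor sign slip in your intermediate expression for $\beta^{U_i}$ (which is in fact $\tfrac{1}{3}\gamma$) cancels in your final answer.
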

\begin{proof}
	Local exponents of $(G, \nabla_G)=\otimes_+(E, \nabla)$ is as follows 
	\[
	\left\{\begin{array}{ccc}
		z=0&z=1&z=\infty\\
		\nu_{0,0}&\nu_{1,0}&\nu_{\infty,0}-1\\
		\nu_{0,1}&\nu_{1,1}&\nu_{\infty,1}-1\\
		\nu_{0,2}&\nu_{1,2}&\nu_{\infty,2}-1
	\end{array}\right\}.
	\]
	Set $\mu_{i,j}=\nu_{i,j}-\delta_{i,\infty}$ and $\boldsymbol{\mu}=(\mu_{i,j})_{j=0,1,2}^{i=0,1,\infty}$. 
	When $\beta^{H_i}=-\mu_{i,0}, \beta^{V_i}=\mu_{i,0}-\frac{2}{3}\beta^T, \beta^T=\nu_{0,0}+\nu_{1,0}+\nu_{\infty,0}-1=\gamma$, we have $m_i(-\beta^{H_i})=1$ for each $i=0,1,\infty$. Since $\delta(\beta, \boldsymbol{\mu})=0$, the divisor $[\boldsymbol{\mu}']_i$ of local exponents of $\mc_{\beta}(G, \nabla_G)$ is 
	\[
	[\boldsymbol{\mu}']_i=[\mu_{i,0}-\tfrac{2}{3}\gamma]+[\mu_{i,1}+\tfrac{1}{3}\gamma]+[\mu_{i,2}+\tfrac{1}{3}\gamma]=[\nu_{i,0}-\delta_{i,\infty}-\tfrac{2}{3}\gamma]+[\nu_{i,1}-\delta_{i,\infty}+\tfrac{1}{3}\gamma]+[\nu_{i,2}-\delta_{i,\infty}+\tfrac{1}{3}\gamma].
	\]
	In particular, local exponents of $(E', \nabla')$ are
	\[
	\left\{\begin{array}{ccc}
		z=0&z=1&z=\infty\\
		\nu_{0,0}-\tfrac{2}{3}\gamma&\nu_{1,0}-\tfrac{2}{3}\gamma&\nu_{\infty,0}-\tfrac{2}{3}\gamma\\
		\nu_{0,1}+\tfrac{1}{3}\gamma&\nu_{1,1}+\tfrac{1}{3}\gamma&\nu_{\infty,1}+\tfrac{1}{3}\gamma\\
		\nu_{0,2}+\tfrac{1}{3}\gamma&\nu_{1,2}+\tfrac{1}{3}\gamma&\nu_{\infty,2}+\tfrac{1}{3}\gamma
	\end{array}\right\}, 
	\]
	whose collection is just $w_3(\boldsymbol{\nu})$ (see the appendix).
\end{proof}

%%%%%%%%%%%%%%%%%%%%%%%%%%%%%%%%%%%%%%%%%%%%
\subsection{Step2: Express $\nabla_{G'}$ for the above basis. }
%%%%%%%%%%%%%%%%%%%%%%%%%%%%%%%%%%%%%%%%%%%%
The Gauss-Manin connection $\nabla^{\GM}\colon \mathbb{R}^1\pi_{2*}\DR_{X/Z} \rightarrow \mathbb{R}^1\pi_{2*}\DR_{X/Z} \otimes \Omega_Z^1(D_Z)$ is the connecting homomorphism induce by (\ref{drex}). We recall how to compute the Gauss-Manin connection $\nabla^{\GM}$ by using \v{C}ech cohomology. We consider it over $Z_0=Z\setminus \{0\}$, that is, we consider the $\mathbb{C}$-linear map $\nabla^{\GM}\colon \mbH^1(\pi^{-1}_2(Z_0), \DR_{X/Z}) \rightarrow \mbH^1(\pi^{-1}_2(Z_0), \DR_{X/Z}) \otimes \Omega_Z^1(D_Z)(Z_0)$. Through this subsection, the cohomology group $H^i(\mcF)$ means $H^i(\pi^{-1}_2(Z_0), \mcF)$. 

Let $\mcU:=\{U_i\}_i$ be an affine open covering of $\pi^{-1}_2(Z_0)$. For a bounded below complex of sheaves $\mcF^\bullet=(\{\mcF^n\}_n, \{d^n_{\mcF}\}_n)$, we define a complex $(\{\check{C}^n(\mcU, \mcF^\bullet)\}_n, \{d^n\}_n)$ by $\check{C}^n(\mcU, \mcF^\bullet)=\oplus_{p+q=n} \check{C}^{p}(\mcU, \mcF^q)$ and 
\[
d^n\colon \check{C}^n(\mcU, \mcF^\bullet)\longrightarrow \check{C}^{n+1}(\mcU, \mcF^\bullet); \  (\alpha_p)_p \longmapsto ((-1)^{p+1}d_{\mcF}^{q-1}(\alpha_{p+1})+d^p_q(\alpha_p))_p
\]
where $(\{C^i(\mcU, \mcF^q)\}_i, \{d^i_q\}_i)$ is the \v{C}ech complex of $\mcF^q$. It is known that the $i$-th hypercohomology group $\mbH^i(\mcF^{\bullet})$ is isomorphic to $\Ker(d^i)/\Image(d^{i-1})$.

By the definition of $\DR_{X}$ and $\DR_{X/Z}$, we have the commutative diagram
\begin{equation}\label{cechdiag}
\begin{tikzcd}%[ampersand replacement=\&]
	H^0(F\otimes \xj{1})\arrow[r]\arrow[d]&H^0(F\otimes \xjzq{1})\arrow[d]\\
	\mbH^1(\DR_{X})\arrow[r]&\mbH^1(\DR_{X/Z}).
\end{tikzcd}
\end{equation}
Since $H^1(F)=0$ by Lemma \ref{fpush}, the right homomorphism is surjective.
Hence each element in $\mbH^1(\DR_{X/Z})$ is of the form $(0, \{\eta|_{U_i}\}_i)+\Image(\overline{\nabla}_F)$, where $\eta \in H^0(F\otimes \Omega^1_{(X,J)/(Z,D)})$. We note that $[\eta]=(0, \{\eta|_{U_i}\}_i)+\Image(d^0_{X/Z})$. We consider the image $\nabla^{\GM}([\eta])$. Since 
\[
H^1(F\otimes \pi_{2}^*\Omega^1_{Z}(D_Z))=H^1(F)\otimes \pi_{2}^*\Omega^1_{Z}(D_Z)(U_0)=0,
\]
the upper homomorphism in (\ref{cechdiag}) is surjective. Hence we can take a lift $\zeta \in H^0(F\otimes \xj{1})$ of $\eta$. Then $(0, \{\zeta|_{U_i}\}_i)+\Image(d^0_{X}) \in \mbH^1(\DR_{X})$ is a lift of $[\eta]$, and we have $d^1_X(0, \{\zeta|_{U_i}\}_i)=(0, 0, \{\nabla_F(\zeta|_{U_i})\}_i)\in\check{C}^{2}(\mcU, \DR_{X})$. %We consider the following diagram
%\[
%\begin{tikzcd}%[ampersand replacement=\&]
	%H^0(F\otimes \xjzq{1})\otimes \Omega_Z^1(D_Z)(Z_0)\arrow[r]\arrow[d]&H^0(F\otimes \xjzq{2})\arrow[d]\\
	%\mbH^1(\DR_{X/Z})\otimes \Omega_Z^1(D_Z)(Z_0)\arrow[r]&\mbH^2(\DR_{X}).
%\end{tikzcd}
%\]
Since there is a natural isomorphism 
\[
H^0(F\otimes \xjzq{1})\otimes \Omega_Z^1(D_Z)(Z_0) \cong H^0(F\otimes \xj{2}),
\]
 the section $\nabla_F(\zeta)$ is written by the form $\omega \wedge \tfrac{dz}{z(z-1)}$, where $\omega \in H^0(F\otimes \xjzq{1})$. We then have
\[
\nabla^{\GM}([\eta])=\nabla^{\GM}((0, \eta)+\Image(d^0_{X/Z}))=((0, \omega)+\Image(d^0_{X/Z}))\otimes \tfrac{dz}{z(z-1)}=[\omega]\otimes \tfrac{dz}{z(z-1)}.
\]
%which is the image of $\nabla_{F}(\zeta)$ by the homomorphism $H^0(F\otimes \xj{2})\rightarrow \mbH^2(\DR_{X})$. 

%Take $(\{f_{ij}\}_{i,j}, \{\eta_i\}_i) \in C^1(\mcU, \textrm{DR}_{X/Z}^\bullet)$ with $\eta_j-\eta_i=\overline{\nabla}_F(f_{ij})$ and $f_{ij}-f_{ik}+f_{jk}=0$ for any $i, j, k$. We take a lift $(\{g_{ij}\}_{i,j}, \{\zeta_i\}_i) \in C^1(\mcU, \textrm{DR}_X^\bullet)$ with $\zeta_j-\zeta_i=\nabla_F(g_{ij})$ and $g_{ij}-g_{ik}+g_{jk}=0$ for any $i, j, k$. Then $d^1(\{g_{ij}\}_{i,j}, \{\zeta_i\}_i)=(\{\zeta_i-\zeta_j+\nabla_F(g_{ij})\}_{i,j}, \{\nabla_F(\zeta_i)\}_i)$. 
%\[
%\begin{tikzcd}
	%0\arrow[r]&C^0(\mcU, \textrm{DR}_{X/Z}^\bullet\otimes \pi_2^*\Omega^1_Z(D_Z))\arrow[d]\arrow[r]&C^1(\mcU, \textrm{DR}_{X}^\bullet)\arrow[r]\arrow[d]&C^1(\mcU, \textrm{DR}_{X/Z}^\bullet)\arrow[r]\arrow[d]&0\\
	%0\arrow[r]&C^1(\mcU, \textrm{DR}_{X/Z}^\bullet\otimes \pi_2^*\Omega^1_Z(D_Z))\arrow[r]&C^2(\mcU, \textrm{DR}_{X}^\bullet)\arrow[r]&C^2(\mcU, \textrm{DR}_{X/Z}^\bullet)\arrow[r]&0\\
%\end{tikzcd}
%\]
%\[
%\begin{tikzcd}
%&(\{f_{ij}\}_{i,j}, \{\zeta_i\}_i) \arrow[r, mapsto]\arrow[d, mapsto]&(\{f_{ij}\}_{i,j}, \{\eta_i\}_i) \\
%(\{g_{ij}\}_{i,j}, \{\omega_i\}_i)\arrow[r, mapsto]&(\{f_{ij}-f_{ik}+f_{jk}\}_{i,j,k}, \{-(\zeta_j-\zeta_i)+\nabla_F(f_{ij})\}_{i,j}, \{\nabla_F(\zeta_i)\}_i) &\\
%\end{tikzcd}
%\]

%\begin{lemma}
%$H^0(\pi^{-1}_2(Z_0), F\otimes \xj{1})\rightarrow H^0(\pi^{-1}_2(Z_0), F\otimes \xjzq{1})\rightarrow\mbH^1(\pi^{-1}_2(Z_0), \DR^\bullet_{X/Z})$ is surjective.
%\end{lemma}
We apply this computation to the basis $[\eta_7], [\eta_4], [\eta_2]$ of $G'$. Then we have 
\[
\nabla_{G'}([\eta_7], [\eta_4], [\eta_2])
=([\eta_7], [\eta_4], [\eta_2])A(z)\frac{dz}{z(z-1)},
\] where
\[
\hspace{-20pt}A(z)=\small
\begin{pmatrix}
	(\tfrac{2}{3}\beta^T-\nu_{0,0}-\nu_{1,0})z-p+\nu_{0,0}-\tfrac{1}{3}\beta^T&-\beta^Tz(\alpha_\infty (z-1)+\alpha_1)&-\beta^T(\alpha_2 z+\alpha_0)(z-1)\\
	\tfrac{q-1}{\beta^T}&(-\frac{1}{3}\beta^T+p+\nu_{0,0})z-\nu_{0,0}+\frac{2}{3}\beta^T&(p-\nu_{1,0})(z-1)\\
	-\tfrac{q}{\beta^T}&-(p+\nu_{0,0})z&(-\frac{1}{3}\beta^T-p+\nu_{1,0})z-\frac{1}{3}\beta^T+p
\end{pmatrix}.
\]

%%%%%%%%%%%%%%%%%%%%%%%%%%%%%%%%%%%%%%%%%%%%
\subsection{Step3: Write down the apparent singularity $\overline{q}$ and the dual parameter $\overline{p}$}
%%%%%%%%%%%%%%%%%%%%%%%%%%%%%%%%%%%%%%%%%%%%
Since $G'\cong \mcO_Z(1) \oplus \mcO_Z\oplus \mcO_Z$, we have $E'=G'\otimes \mcO_Z(-1)\cong \mcO_Z \oplus \mcO_Z(-1)\oplus \mcO_Z(-1)$. We use the same character to denote local sections of $E'$ corresponding to a local section of $G'$. Let $E'=E'_0\supset E'_1\supset E'_2\supset E'_3=0$ be the filtration in Proposition \ref{appfil}. $E'_2$ is generated by $[\eta_{7}]$. Set 
\begin{align*}
\xi_2:=&(p+\nu_{0,0})\eta_{2}-(p-\nu_{1,0})\eta_{4},\\
\xi_4:=&-\tfrac{q}{\beta^T}\eta_{2}+\tfrac{q-1}{\beta^T}\eta_{4}+\left\{(\tfrac{2}{3}\beta^T-\nu_{0,0}-\nu_{1,0})z-p+\nu_{0,0}-\tfrac{1}{3}\beta^T\right\}\eta_{7}.
\end{align*}
 When $f_{14}(q,p,1)\neq 0$, the sections $[\eta_{7}], [\xi_2], [\xi_4]$ form a basis of $E'$. We have $\nabla'([\eta_{7}])=[\xi_4]\otimes \tfrac{dz}{z(z-1)}$, and so $E'_1$ is generated by $[\eta_{7}]$ and $[\xi_4]$. We can see by the computation that $\nabla'$ is of the form
 \begin{equation}
 	\nabla'([\eta_{7}], [\xi_4], [\xi_{2}])
 	=([\eta_{7}], [\xi_4], [\xi_{2}])
 	\begin{pmatrix}
 		0& *& *\\
 		1& *& *\\
 		0& \tfrac{f_{14}z-qf_{16}}{\beta^Tf_{14}}& c(z)
 	\end{pmatrix}
 	\frac{dz}{z(z-1)},
 \end{equation}
 where $f_{ij}=f_{ij}(q,p,1)$ and 
  \[
 c(z)=\frac{-\beta^T f_{14}z+(3p-\beta^T)f_{14}-3qp\beta^T +3q\beta^T \nu_{1,0}}{3f_{14}}.
 \]
The homomorphism $u$ in (\ref{apphom}) is given by $u([\xi_4])=\tfrac{f_{14}z-qf_{16}}{\beta^Tf_{14}} [\xi_{2}]\otimes \frac{dz}{z(z-1)}$. So the apparent singularity $\overline{q}$ of $(E', \nabla', l'_*)$ is $(f_{16}/f_{14})q$. The homomorphism $h$ in (\ref{h1diag}) is given by $h([\xi_{2}])=[\xi_{2}]\otimes c(\overline{q})\frac{dz}{z(z-1)}$. So the dual parameter $\overline{p}$ is $c(\overline{q})$. We can check 
 \[
c(\overline{q})=\dfrac{\beta^{V_1}f_{14}f_{16}-\beta^{V_\infty}f_{16}f_{46}-\beta^{V_0}f_{14}f_{46}}{\beta^T f_{14}}.
 \]
Therefore, the transformation $\otimes_-\circ \mc_\beta\circ\otimes_+$ gives the rational map 
\[
\mathbb{P}^2 \dasharrow \mathbb{P}^2; (q:p:1)\mapsto (\overline{q}: \overline{p}: 1),
\]
which coincides with the map (\ref{quadtranf}). We hence have $\varphi_{w_3(\boldsymbol{\nu})}\circ (\otimes_-\circ \mc_\beta\circ\otimes_+)=w_3\circ \varphi_{\boldsymbol{\nu}}$ on a Zariski open subset of $M(\boldsymbol{\nu})$, and it holds on $M(\boldsymbol{\nu})$.

%%%%%%%%%%%%%%%%%%%%%%%%%%%%%%%%%%%%%%%%%%%%
%%%%%%%%%%%%%%%%%%%%%%%%%%%%%%%%%%%%%%%%%%%%
%\section{Translation}
%%%%%%%%%%%%%%%%%%%%%%%%%%%%%%%%%%%%%%%%%%%%
%%%%%%%%%%%%%%%%%%%%%%%%%%%%%%%%%%%%%%%%%%%%

%%%%%%%%%%%%%%%%%%%%%%%%%%%%%%%%%%%%%%%%%%%%
\appendix\section*{Appendix A: The action of $\widetilde{W}(E^{(1)}_6)$}
%%%%%%%%%%%%%%%%%%%%%%%%%%%%%%%%%%%%%%%%%%%%

%%%%%%%%%%%%%%%%%%%%%%%%%%%%%%%%%%%%%%%%%%%%
\subsection*{A.1 The action on $\Pic (S_{\boldsymbol{\nu}})$}
%%%%%%%%%%%%%%%%%%%%%%%%%%%%%%%%%%%%%%%%%%%%
\[
\begin{array}{llll}
w_1(\mcE_i)=\mcE_{(23)(i)}, & w_2(\mcE_i)=\mcE_{(12)(i)}, &w_4(\mcE_i)=\mcE_{(78)(i)}, &w_5(\mcE_i)=\mcE_{(67)(i)}, \\
w_6(\mcE_i)=\mcE_{(59)(i)}, &w_0(\mcE_i)=\mcE_{(45)(i)}, &\sigma_{1}(\mcE_i)=\mcE_{(14)(25)(39)(i)}, &\sigma_{2}(\mcE_i)=\mcE_{(46)(57)(89)(i)}\\
\end{array}, 
\]
\[
w_3(\mcE_j)=
\left\{
\begin{array}{cc}
	\mcE_0-\mcE_4-\mcE_6 & j=1\\
	\mcE_0-\mcE_1-\mcE_6 & j=4\\
	\mcE_0-\mcE_1-\mcE_4 & j=6\\
	\mcE_j & j\neq 1, 4, 6.
\end{array}
\right.
\]

%%%%%%%%%%%%%%%%%%%%%%%%%%%%%%%%%%%%%%%%%%%%
\subsection*{A.2 The action on $\mcN^0$}\label{actionN0}
%%%%%%%%%%%%%%%%%%%%%%%%%%%%%%%%%%%%%%%%%%%%

\[
\left\{\begin{array}{ccc}
	\nu_{0,0}&\nu_{1,0}&\nu_{\infty,0}\\
	\nu_{0,1}&\textcolor{blue}{\nu_{1,1}}&\nu_{\infty,1}\\
	\nu_{0,2}&\textcolor{red}{\nu_{1,2}}&\nu_{\infty,2}
\end{array}\right\}\overset{w_1}{\longmapsto}
\left\{\begin{array}{ccc}
	\nu_{0,0}&\nu_{1,0}&\nu_{\infty,0}\\
	\nu_{0,1}&\textcolor{red}{\nu_{1,2}}&\nu_{\infty,1}\\
	\nu_{0,2}&\textcolor{blue}{\nu_{1,1}}&\nu_{\infty,2}
\end{array}\right\}, \quad 
\left\{\begin{array}{ccc}
	\nu_{0,0}&\textcolor{blue}{\nu_{1,0}}&\nu_{\infty,0}\\
	\nu_{0,1}&\textcolor{red}{\nu_{1,1}}&\nu_{\infty,1}\\
	\nu_{0,2}&\nu_{1,2}&\nu_{\infty,2}
\end{array}\right\}\overset{w_2}{\longmapsto}
\left\{\begin{array}{ccc}
	\nu_{0,0}&\textcolor{red}{\nu_{1,1}}&\nu_{\infty,0}\\
	\nu_{0,1}&\textcolor{blue}{\nu_{1,0}}&\nu_{\infty,1}\\
	\nu_{0,2}&\nu_{1,2}&\nu_{\infty,2}
\end{array}\right\},
\]
\[
\left\{\begin{array}{ccc}
	\nu_{0,0}&\nu_{1,0}&\nu_{\infty,0}\\
	\nu_{0,1}&\nu_{1,1}&\textcolor{blue}{\nu_{\infty,1}}\\
	\nu_{0,2}&\nu_{1,2}&\textcolor{red}{\nu_{\infty,2}}
\end{array}\right\}\overset{w_4}{\longmapsto}
\left\{\begin{array}{ccc}
	\nu_{0,0}&\nu_{1,0}&\nu_{\infty,0}\\
	\nu_{0,1}&\nu_{1,1}&\textcolor{red}{\nu_{\infty,2}}\\
	\nu_{0,2}&\nu_{1,2}&\textcolor{blue}{\nu_{\infty,1}}
\end{array}\right\}, \quad 
\left\{\begin{array}{ccc}
	\nu_{0,0}&\nu_{1,0}&\textcolor{blue}{\nu_{\infty,0}}\\
	\nu_{0,1}&\nu_{1,1}&\textcolor{red}{\nu_{\infty,1}}\\
	\nu_{0,2}&\nu_{1,2}&\nu_{\infty,5}
\end{array}\right\}\overset{w_1}{\longmapsto}
\left\{\begin{array}{ccc}
	\nu_{0,0}&\nu_{1,0}&\textcolor{red}{\nu_{\infty,1}}\\
	\nu_{0,1}&\nu_{1,1}&\textcolor{blue}{\nu_{\infty,0}}\\
	\nu_{0,2}&\nu_{1,2}&\nu_{\infty,2}
\end{array}\right\},
\]
\[
\left\{\begin{array}{ccc}
	\nu_{0,0}&\nu_{1,0}&\nu_{\infty,0}\\
	\textcolor{blue}{\nu_{0,1}}&\nu_{1,1}&\nu_{\infty,1}\\
	\textcolor{red}{\nu_{0,2}}&\nu_{1,2}&\nu_{\infty,2}
\end{array}\right\}\overset{w_6}{\longmapsto}
\left\{\begin{array}{ccc}
	\nu_{0,0}&\nu_{1,0}&\nu_{\infty,0}\\
	\textcolor{red}{\nu_{0,2}}&\nu_{1,1}&\nu_{\infty,1}\\
	\textcolor{blue}{\nu_{0,1}}&\nu_{1,2}&\nu_{\infty,2}
\end{array}\right\}, \quad 
\left\{\begin{array}{ccc}
	\textcolor{blue}{\nu_{0,0}}&\nu_{1,0}&\nu_{\infty,0}\\
	\textcolor{red}{\nu_{0,1}}&\nu_{1,1}&\nu_{\infty,1}\\
	\nu_{0,2}&\nu_{1,2}&\nu_{\infty,2}
\end{array}\right\}\overset{w_0}{\longmapsto}
\left\{\begin{array}{ccc}
	\textcolor{red}{\nu_{0,1}}&\nu_{1,0}&\nu_{\infty,0}\\
	\textcolor{blue}{\nu_{0,0}}&\nu_{1,1}&\nu_{\infty,1}\\
	\nu_{0,2}&\nu_{1,2}&\nu_{\infty,2}
\end{array}\right\},
\]
\begin{align*}
&
\left\{\begin{array}{ccc}
	\textcolor{blue}{\nu_{0,0}}&\textcolor{red}{\nu_{1,0}}&\nu_{\infty,0}\\
	\textcolor{blue}{\nu_{0,1}}&\textcolor{red}{\nu_{1,1}}&\nu_{\infty,1}\\
	\textcolor{blue}{\nu_{0,2}}&\textcolor{red}{\nu_{1,2}}&\nu_{\infty,2}
\end{array}\right\}\overset{\sigma_1}{\longmapsto}
\left\{\begin{array}{ccc}
	\textcolor{red}{\nu_{1,0}}&\textcolor{blue}{\nu_{0,0}}&\nu_{\infty,0}\\
	\textcolor{red}{\nu_{1,1}}&\textcolor{blue}{\nu_{0,1}}&\nu_{\infty,1}\\
	\textcolor{red}{\nu_{1,2}}&\textcolor{blue}{\nu_{0,2}}&\nu_{\infty,2}
\end{array}\right\},\\
&
\left\{\begin{array}{ccc}
	\textcolor{blue}{\nu_{0,0}}&\nu_{1,0}&\textcolor{red}{\nu_{\infty,0}}\\
	\textcolor{blue}{\nu_{0,1}}&\nu_{1,1}&\textcolor{red}{\nu_{\infty,1}}\\
	\textcolor{blue}{\nu_{0,2}}&\nu_{1,2}&\textcolor{red}{\nu_{\infty,2}}
\end{array}\right\}\overset{\sigma_2}{\longmapsto}
\left\{\begin{array}{ccc}
	\textcolor{red}{\nu_{\infty,0}}-\tfrac{2}{3}&\nu_{1,0}&\textcolor{blue}{\nu_{0,0}}+\tfrac{2}{3}\\
	\textcolor{red}{\nu_{\infty,1}}-\tfrac{2}{3}&\nu_{1,1}&\textcolor{blue}{\nu_{0,1}}+\tfrac{2}{3}\\
	\textcolor{red}{\nu_{\infty,2}}-\tfrac{2}{3}&\nu_{1,2}&\textcolor{blue}{\nu_{0,2}}+\tfrac{2}{3}
\end{array}\right\},\\
&
\left\{\begin{array}{ccc}
	\nu_{0,0}&\nu_{1,0}&\nu_{\infty,0}\\
	\nu_{0,1}&\nu_{1,1}&\nu_{\infty,1}\\
	\nu_{0,2}&\nu_{1,2}&\nu_{\infty,2}
\end{array}\right\}\overset{w_3}{\longmapsto}
\left\{\begin{array}{ccc}
	\nu_{0,0}-\tfrac{2}{3}\gamma&\nu_{1,0}-\tfrac{2}{3}\gamma&\nu_{\infty,0}-\tfrac{2}{3}\gamma\\
	\nu_{0,1}+\tfrac{1}{3}\gamma&\nu_{1,1}+\tfrac{1}{3}\gamma&\nu_{\infty,1}+\tfrac{1}{3}\gamma\\
	\nu_{0,2}+\tfrac{1}{3}\gamma&\nu_{1,2}+\tfrac{1}{3}\gamma&\nu_{\infty,2}+\tfrac{1}{3}\gamma
\end{array}\right\}.\\
\end{align*}

\end{document}